\renewcommand{\epsilon}{\varepsilon}
\DeclareMathOperator{\spt}{spt}
\DeclareMathOperator{\dist}{dist}
\DeclareMathOperator{\graph}{graph}
\DeclareMathOperator{\Span}{span}
\DeclareMathOperator{\Div}{div}
\renewcommand{\div}{\Div}
\DeclareMathOperator{\sech}{sech}
\def\R{\mathbb{R}}
\def\N{\mathbb{N}}
\def\a{\alpha}
\def\e{\epsilon}
\def\r{\rho}
\def\s{\sigma}
\def\o{\omega}
\def\l{\lambda}
\def\H{\mathcal{H}}
\def\wt{\widetilde}
\def\ov{\overline}
\newcommand{\pd}{\partial}
\newcommand{\cd}{\nabla}
\def\ba #1\ea {\begin{align} #1\end{align}}
\def\bann #1\eann {\begin{align*} #1\end{align*}}
\def\ben #1\een {\begin{enumerate} #1\end{enumerate}}
\def\bi #1\ei {\begin{itemize}\renewcommand\labelitemi{--} #1\end{itemize}}
\newcommand{\inner}[2]{\left\langle#1,#2\right\rangle} 
\newtheorem{mtheorem}{Theorem}
\newtheorem{theorem}{Theorem}[section]
\newtheorem*{theorem*}{Theorem}
\newtheorem{lemma}[theorem]{Lemma}
\newtheorem{thm}[theorem]{Theorem}
\newtheorem{remark}[theorem]{Remark}
\newtheorem*{remark*}{Remark}
\newtheorem{corollary}[theorem]{Corollary}
\newtheorem*{corollary*}{Corollary}
\newtheorem*{conjecture*}{Conjecture}
\newtheorem{claim}{Claim}[theorem]
\newtheoremstyle{TheoremNum}
        {\topsep}{\topsep}              
        {\itshape}                      
        {}                              
        {\bfseries}                     
        {.}                             
        { }                             
        {\thmname{#1}\thmnote{ \bfseries #3}}
    \theoremstyle{TheoremNum}
\title[Translating solutions of MCF in slab regions]{On the existence of translating solutions of mean curvature flow in slab regions}
\author{Theodora Bourni}
\author{Mat Langford}
\author{Giuseppe Tinaglia}
\thanks{The third author was partially supported by EPSRC grant no. EP/M024512/1}
\address{Department of Mathematics, University of Tennessee Knoxville, Knoxville TN, 37996-1320}
\email{tbourni@utk.edu}
\email{mlangford@utk.edu}
\address{Department of Mathematics, King's College London, London, WC2R 2LS, U.K.}
\email{giuseppe.tinaglia@kcl.ac.uk}
\begin{document}

\begin{abstract}
We prove, in all dimensions $n\geq 2$, that there exists a convex translator lying in a slab of width $\pi\sec\theta$ in $\R^{n+1}$ (and in no smaller slab) if and only if $\theta\in[0,\frac{\pi}{2}]$. We also obtain convexity and regularity results for translators which admit appropriate symmetries and study the asymptotics and reflection symmetry of translators lying in slab regions.
\end{abstract}

\maketitle

\tableofcontents

\section{Introduction}

A solution of mean curvature flow is a smooth one-parameter family $\{\Sigma_t\}_{t\in\R}$ of hypersurfaces $\Sigma_t$ in $\R^{n+1}$ with normal velocity equal to the mean curvature vector. A translating solution of mean curvature flow is one which evolves purely by translation: $\Sigma_{t+s}=\Sigma_t+se$ for some $e\in \R^{n+1}\setminus \{0\}$ and each $s,t\in(-\infty,\infty)$. In that case, the time slices are all congruent and 
satisfy
\begin{equation}\label{eq:translator}
H=-\left\langle \nu,e\right\rangle\,,
\end{equation}
where $\nu$ is a choice of unit normal field and $H=\Div\nu$ is the corresponding mean curvature. Conversely, if a hypersurface satisfies \eqref{eq:translator} then the one-parameter family of translated hypersurfaces $\Sigma_t:=\Sigma+te$ satisfies mean curvature flow. We shall eliminate the scaling invariance and isotropy of \eqref{eq:translator} by restricting attention to translating solutions which move with unit speed in the `upwards' direction. That is, we henceforth assume that $e=e_{n+1}$. We will refer to a hypersurface $\Sigma^n\subset \R^{n+1}$ satisfying \eqref{eq:translator} with $e=e_{n+1}$ as a \emph{translator}.

X.-J.~Wang proved that any convex translator in $\R^{n+1}$ which is not an entire graph must lie in a slab region \cite[Corollary 2.2]{Wa11}. 
Using the Grim hyperplane, described below, as a barrier, it can be shown that there can exist no strictly convex translator in a slab of width less than or equal to $\pi$. The main result of this paper provides the existence of a strictly convex translator in all larger slabs.

Denote by $S_\theta^{n+1}\subset \R^{n+1}$ be the slab region defined by
\[
S_\theta^{n+1}:=\{(x,y,z)\in \R\times\R^{n-1}\times \R:|x|<\tfrac{\pi}{2}\sec\theta\}\subset \R^{n+1}\,.
\]

\begin{mtheorem}[Existence of convex translators in slab regions]\label{thm:existence}
For every $n\geq 2$ and every $\theta\in(0,\frac{\pi}{2})$ there exists a strictly convex translator $\Sigma^n_\theta$ which lies in $S_\theta^{n+1}$ 
and in no smaller slab. 
\end{mtheorem}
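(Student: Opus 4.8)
The plan is to produce $\Sigma^n_\theta$ as the graph of a function $u$ over the slab cross-section
\[
\Omega:=\{(x,y)\in\R\times\R^{n-1}:|x|<a\},\qquad a:=\tfrac\pi2\sec\theta,
\]
which solves the translator graph equation $\Div\big(Du/\sqrt{1+|Du|^2}\big)=1/\sqrt{1+|Du|^2}$, is invariant under the reflection $x\mapsto-x$ and under the standard $O(n-1)$ action on the $y$-variables, and blows up at the faces $\{|x|=a\}$. The blow-up pins down the slab: it forces $\Sigma^n_\theta\subset S^{n+1}_\theta$ while $\overline{\Sigma^n_\theta}$ meets both faces, so $\Sigma^n_\theta$ lies in no smaller slab. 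The hypothesis $\theta>0$, i.e.\ that the width $2a$ is strictly larger than $\pi$, is exactly what allows a \emph{strictly convex} example: a translator depending only on $x$ is a Grim reaper, whose domain is an interval of length exactly $\pi$, so a translator filling a wider slab must genuinely depend on $y$, whereas for width $\le\pi$ the only such translators are the straight and tilted Grim hyperplanes, which are ruled rather than strictly convex.

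For the construction I would use an exhaustion together with Jenkins--Serrin-type boundary data. For $R>0$ set $\Omega_R:=\Omega\cap B_R(0)$; its boundary consists of the flat faces $\{|x|=a\}\cap B_R$, which are minimal for Ilmanen's conformal metric $e^{2z/n}g_{\mathrm{eucl}}$ (the metric in which translators are minimal hypersurfaces), and the strictly mean-convex spherical cap $\partial B_R\cap\Omega$. On $\Omega_R$ solve the translator equation with boundary value $+\infty$ on the faces and $0$ on the cap; the infinite data is obtained as the increasing limit of solutions with truncated data $\min(k,\cdot)$, justified by the interior gradient estimate for the translator equation --- equivalently the Bombieri--De Giorgi--Miranda / Korevaar estimate for minimal graphs in the conformal metric, which on compact sets is uniformly comparable to $g_{\mathrm{eucl}}$ --- together with Grim-reaper barriers bounding the oscillation. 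Uniqueness of each $u_R$ forces the two symmetries. Letting $R\to\infty$, uniform interior $C^1$ and (via Schauder) $C^\infty$ bounds on compact subsets of $\Omega$ --- after the normalization $\tilde u_R:=u_R-u_R(0)$, which absorbs any divergence of the central height --- yield a subsequential limit $u$: a smooth, symmetric translator on all of $\Omega$ with $u(0)=0$.

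The crux is the nondegeneracy of this limit: one must rule out the approximating surfaces escaping vertically or pinching, so that $u$ is genuinely defined throughout $\Omega$ and blows up at $\{|x|=a\}$, making $\Sigma^n_\theta:=\graph u$ complete and slab-filling. This is where $2a>\pi$ enters quantitatively. Because $a>\tfrac\pi2$, Grim reapers of width $\pi$ can be translated both horizontally and vertically so as to lie inside the slab, and these serve as sub-barriers trapping $\tilde u_R$ from below on fixed compact sets; an $O(n-1)$-symmetrized minimum of tilted Grim hyperplanes of width exactly $2a$ serves as an upper barrier near the faces, with the correct logarithmic blow-up. If $2a=\pi$ the scheme degenerates onto the straight Grim hyperplane, and if $2a<\pi$ the barriers do not exist, matching the non-existence of translators in narrower slabs. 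I expect this step --- extracting a genuinely non-collapsed limit, and making the a priori estimates uniform up to a compact portion of the faces, given that the naive barriers involve constants that degenerate as $R\to\infty$ --- to be the main obstacle.

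Finally, strict convexity. Being a complete, properly embedded graphical translator over the slab cross-section, invariant under $x\mapsto-x$ and $O(n-1)$ and asymptotic to the faces, $\Sigma^n_\theta$ is convex by the convexity-under-symmetry results established earlier in the paper (whose proof applies the maximum principle for the linearized operator to suitable second derivatives of $u$, the symmetries and the boundary behavior supplying the signs on $\partial\Omega$, so that the slices $\{y=\mathrm{const}\}$ and the graphical level sets $\{u=\mathrm{const}\}$ are convex). If $\Sigma^n_\theta$ failed to be strictly convex, the strong maximum principle applied to the least eigenvalue of its second fundamental form --- which obeys a favorable equation thanks to the translator structure --- would propagate a null direction to all of $\Sigma^n_\theta$, forcing it to split off a nontrivial linear subspace of lines; this subspace would be $O(n-1)$-invariant, hence either contained in the $xz$-plane or containing all the $y$-directions, and in every case $\Sigma^n_\theta$ would have to be a lower-dimensional Grim hyperplane, a hyperplane, or a product with a bowl soliton --- none of which is a genuinely $y$-dependent graph blowing up at the faces of a slab of width $\pi\sec\theta$ with $\theta\in(0,\tfrac\pi2)$. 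This contradiction gives strict convexity, completing the construction for all $n\ge2$ and $\theta\in(0,\tfrac\pi2)$.
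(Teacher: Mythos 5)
Your high-level strategy (exhaust the slab cross-section, solve a Dirichlet problem on the pieces, use barriers for non-degeneracy, take a limit, then invoke the convexity theorem) parallels the paper's, but the route you propose is different in several important respects and has genuine gaps at exactly the points you identify as delicate.

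First, the \emph{construction scheme}. You propose a Jenkins--Serrin-type problem on $\Omega_R = \Omega\cap B_R$ with boundary data $+\infty$ on the slab faces and $0$ on the spherical cap. The paper instead poses zero Dirichlet data on a carefully chosen family of bounded convex domains $\underline\Omega_R\subset\Omega$: these are sublevel sets of an explicit subsolution, a non-isotropically stretched Angenent oval, $\underline u(x,y)=-\sec^2\theta\log\cos(x/\sec\theta)+\tan^2\theta\log\cosh(|y|/\tan\theta)$, translated down, and the domains exhaust the full slab as $R\to\infty$. This sidesteps the need for a Jenkins--Serrin theory for the translator equation, which is not off-the-shelf (the usual Jenkins--Serrin length/area conditions live in the Euclidean minimal-graph setting, and Ilmanen's conformal factor $e^{z/n}$ depends on the \emph{unknown} graph height, so the relevant comparisons are not the ones you allude to). Your scheme can likely be made to work, but it is not a small step, and the paper's choice is substantially more economical.

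Second, and more seriously, the \emph{curvature estimate}. You rely on interior Korevaar/BDGM gradient bounds plus Schauder. These do give $C^\infty$ convergence on compact subsets of the open slab, but they degenerate near $\{|x|=a\}$ where $|Du|\to\infty$, so they do not yield a \emph{global} bound on the second fundamental form. The convexity theorem you then want to invoke (the paper's Theorem 3.1, extending Spruck--Xiao) explicitly requires bounded $|A|$. This is exactly why the paper develops Section 2: the Allard-based Lemma 2.2 and Corollary 2.4 give $|\nabla^\ell A|$ bounds that hold uniformly up to the boundary $\graph\psi$ of the approximating pieces, and hence pass to the complete limit. That machinery depends (for $n\geq 7$) on the rotational symmetry in the $y$-variables to rule out non-planar minimal cones in the blow-up analysis --- a dimension-theoretic subtlety your proposal does not engage at all. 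Without a substitute for these global curvature bounds, the application of the convexity theorem in your last paragraph does not go through.

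Third, the \emph{non-degeneracy / height estimate}. You correctly flag this as the main obstacle, but you do not resolve it. The paper's key insight here is the upper barrier built by rotating a time slice of the Angenent oval of width $\pi\sec\theta$ about the $x$-axis and truncating at the right height; comparing this supersolution against $\graph u_R$ gives the explicit lower bound $-u_R(0)\geq \frac{1-\cos\theta}{\sin\theta}R\to\infty$. This quantitative estimate is precisely what rules out vertical escape and degeneration. Grim-reaper barriers alone do not control the middle of the domain this sharply. Relatedly, the argument that the final surface attains width $\pi\sec\theta$ (i.e.\ lies in no smaller slab) is done in the paper by a maximum-principle argument for $H/v$ with $v=1-\frac{2x}{\pi}\cos\theta$ combined with Myers' theorem, not by appealing directly to a blow-up of the boundary data; your version leans on a blow-up behavior that you have not yet established.

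Your strict-convexity paragraph, once convexity and the slab-filling property are in hand, matches the paper's one-line argument (the splitting theorem plus uniqueness of the Grim reaper rules out a flat direction), and is fine. To summarize: the outline is sound, but the two load-bearing steps --- uniform global curvature bounds and a quantitative height/width estimate --- are missing, and the Jenkins--Serrin construction you propose is not a routine variant of the minimal-graph theory for the translator equation. These are exactly the technical contributions the paper supplies via the Section 2 compactness theory and the Angenent-oval barriers.
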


Spruck and Xiao have recently proved that every mean convex translator in $\R^3$ is actually convex \cite[Theorem 1.1]{SX}. To prove convexity of the examples in Theorem \ref{thm:existence}, we have extended their result to higher dimensions under a rotational symmetry hypothesis. 

Around the same time our work was completed, Hoffman, Ilmanen, Martin and White have also provided an existence theorem for all slabs of width greater than $\pi$ in the case $n=2$ \cite[Theorem 1.1]{HIMW}. They are also able to prove uniqueness in this case, thereby completing the classification of translating graphs in $\mathbb R^3$.


The most prominent example of a translator is the Grim Reaper curve, $\Gamma^1\subset \R^2$, defined by
\[
\Gamma^1:=\left\{(x,-\log\cos x):\vert x\vert<\tfrac{\pi}{2}\right\}\,.
\]
Taking products with lines then yields the Grim hyperplanes
\[
\Gamma^n:=\left\{(x_1,\dots,x_n,-\log\cos x_1):\vert x_1\vert<\tfrac{\pi}{2}\right\}\,.
\]
The Grim hyperplane $\Gamma^n$ lies in the slab $\{(x_1,\dots,x_n):\vert x_1\vert<\frac{\pi}{2}\}$ (and in no smaller slab). More generally, if $\Sigma^{n-k}$ is a translator in $\R^{n-k+1}$ then $\Sigma^{n-k}\times \R^k$ is a translator in $\R^{n-k+1}\times \R^k\cong \R^{n+1}$. 

There is also a family of `oblique' Grim planes $\Gamma_{\theta,\phi}^n$ parametrized by $(\theta,\phi)\in [0,\frac{\pi}{2})\times S^{n-2}$. These are obtained by rotating the `standard' Grim plane $\Gamma^n$ through the angle $\theta\in[0,\frac{\pi}{2})$ in the plane $\Span\{\phi,e_{n+1}\}$ for some unit vector $\phi\in\Span\{e_2,\dots e_{n}\}$ and then scaling by the factor $\sec\theta$. To see that the result is indeed a translator, we need only check that
\[
-H_\theta=-\cos\theta H=\cos\theta\inner{\nu}{e_{n+1}}=\inner{\cos\theta\nu+\sin\theta\phi}{e_{n+1}}=\inner{\nu_\theta}{e_{n+1}}\,,
\]
where $H_\theta$ and $\nu_\theta$ are the mean curvature and  unit normal to $\Gamma^n_{\theta,\phi}$ respectively.
The oblique Grim hyperplane $\Gamma_{\theta,\phi}^n$ lies in the slab $S^{n+1}_\theta:=\{(x_1,\dots,x_n):\vert x_1\vert<\frac{\pi}{2}\sec\theta\}$ (and in no smaller slab). More generally, if $\Sigma^{n-k}$ is a translator in $\R^{n-k+1}$ then the hypersurface $\Sigma^n_{\theta,\phi}$ obtained by rotating $\Sigma^{n-k}\times \R^k$ counterclockwise through angle $\theta$ in the plane $\phi\wedge e_{n+1}$ and then scaling  by $\sec\theta$ is a translator in $\R^{n+1}$, so long as $\phi$ is a non-zero vector in $\Span\{e_{n-k+1},\dots e_{n}\}$. 

Altschuler and Wu constructed a convex translating entire graph asymptotic to a paraboloid \cite{AlWu94} (see also \cite{CSS07}). X.-J.~Wang proved that this solution is the only convex entire translator in $\R^3$ and constructed further convex entire examples in higher dimensions \cite{Wa11}. He also proved the existence of strictly convex translating solutions which lie in slab regions in $\R^{n+1}$ for all $n\geq 2$. The existence of these examples was obtained by exploiting the Legendre transform and the existence of convex solutions of certain fully nonlinear equations. Unfortunately, this method loses track of the precise geometry of the domain on which the solution is defined and so it remained unclear exactly which slabs admit translators (cf. \cite[Remark 1.6]{SX}). Theorem~\ref{thm:existence} resolves this problem.

Hoffman, Ilmanen, Martin and White provide a different construction of examples of translating graphs in slabs in $\mathbb R^{n+1}$, extending an earlier (unpublished) construction of Ilmanen for the case $n=2$ \cite{HIMW}. However, just as for X.-J. Wang's examples, it is unclear exactly in which slabs these translators lie.


Our next theorem provides a step towards addressing the classification problem in higher dimensions.
 
 \begin{mtheorem}[Unique asymptotics and reflection symmetry]\label{thm:asymptotics}
Given $n\geq 2$ and $\theta\in(0,\frac{\pi}{2})$ let $\Sigma_\theta^n$ be a convex translator which lies in $S_\theta^{n+1}$ 
and in no smaller slab. If $n\geq 3$, assume in addition that $\Sigma_\theta^n$ is rotationally symmetric with respect to the subspace $\mathbb{E}^{n-1}:=\Span\{e_2,\dots,e_n\}$. Given any unit vector $\phi\in\mathbb{E}^{n-1}$ the curve $\{\sin\omega \phi-\cos\omega e_{n+1}:\omega\in[0,\theta)\}$ lies in the normal image of $\Sigma_\theta^n$ and the translators
\[
\Sigma^n_{\theta,\omega}:=\Sigma^n_\theta-P(\sin\omega \phi-\cos\omega e_{n+1})
\]
converge locally uniformly in the smooth topology to the oblique Grim hyperplane $\Gamma^n_{\theta,\phi}$ as $\omega\to \theta$, where $P:S^n\to\Sigma^n_{\theta}$ is the inverse of the Gauss map.

Moreover, $\Sigma_\theta^n$ is reflection symmetric across the hyperplane $\{0\}\times \R^{n}$.
\end{mtheorem}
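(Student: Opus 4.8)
The plan is to exploit convexity to control the Gauss map and then identify blow-down (or rather blow-out) limits along the boundary of the slab with oblique Grim hyperplanes. Since $\Sigma^n_\theta$ is convex and lies in $S^{n+1}_\theta$ but in no narrower slab, the Gauss map $\nu:\Sigma^n_\theta\to S^n$ is a diffeomorphism onto an open subset of the sphere, and the closure of its image must contain the two points $\pm(\cos\theta\,e_1\mp\sin\theta\,e_{n+1})$ corresponding to the two bounding hyperplanes of the optimal slab; more precisely, I would first show that the full quarter-circle of unit vectors $\{\sin\omega\,\phi-\cos\omega\,e_{n+1}:\omega\in[0,\theta)\}$ (for each fixed unit $\phi\in\mathbb E^{n-1}$) lies in the normal image, using the rotational symmetry to reduce to the planar ($n=2$) picture together with the translator equation $H=-\langle\nu,e_{n+1}\rangle$, which forces $\langle\nu,e_{n+1}\rangle<0$ wherever $H>0$. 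The inverse Gauss map $P:S^n\to\Sigma^n_\theta$ is then well-defined and smooth on this arc, so the translated hypersurfaces $\Sigma^n_{\theta,\omega}=\Sigma^n_\theta-P(\sin\omega\,\phi-\cos\omega\,e_{n+1})$ are well-defined and all pass through the origin with a fixed unit normal $\sin\omega\,\phi-\cos\omega\,e_{n+1}$ there.

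Next I would extract a limit as $\omega\to\theta$. Each $\Sigma^n_{\theta,\omega}$ is a convex translator through the origin lying in the fixed-width slab $S^{n+1}_\theta$ (translating horizontally does not change the slab), so by convexity and standard a priori estimates for translators (local gradient and curvature bounds coming from \eqref{eq:translator}, or simply the Arzelà–Ascoli argument for convex hypersurfaces together with interior regularity of the translator equation) a subsequence converges locally uniformly in the smooth topology to a convex translator $\Sigma_\infty$ through the origin with unit normal $\sin\theta\,\phi-\cos\theta\,e_{n+1}=\nu_\theta$ there. The key point is that $\Sigma_\infty$ must be a flat piece in the $e_1$-direction: as $\omega\to\theta$ the normal approaches the boundary of the admissible normal cone, which by the convexity structure (the supporting hyperplanes at $x_1=\pm\frac\pi2\sec\theta$) forces the limit to split off a line — I would make this precise by a strong maximum principle / dimension reduction argument applied to the second fundamental form, exactly as in the proof that $\Sigma^n_\theta$ is convex (the Spruck–Xiao-type argument referenced in the paper). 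Once $\Sigma_\infty$ splits as a line times a lower-dimensional translator, the rotational symmetry and the slab width pin it down to be precisely the oblique Grim hyperplane $\Gamma^n_{\theta,\phi}$, since that is the unique translator of its width realizing that normal direction. Uniqueness of the limit (independence of subsequence) then upgrades convergence to the full limit $\omega\to\theta$.

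For the reflection symmetry across $\{0\}\times\R^n$, I would run Alexandrov reflection (the method of moving planes) with the family of hyperplanes $\{x_1=c\}$. Because $\Sigma^n_\theta$ is convex and contained in the symmetric slab $\{|x_1|<\frac\pi2\sec\theta\}$ and in no narrower one, its two "caps" in the $\pm e_1$ directions are asymptotic, by the first part of the theorem, to the two oblique Grim hyperplanes $\Gamma^n_{\theta,\phi}$ and $\Gamma^n_{\theta,-\phi}$, which are reflections of one another across $\{x_1=0\}$. This asymptotic matching provides the control at infinity needed to start the moving-plane argument (the reflected cap lies above the original near infinity), and the strong maximum principle for the translator equation \eqref{eq:translator} lets the plane move until it reaches $x_1=0$ without the two pieces touching prematurely; at the critical position the reflected surface and $\Sigma^n_\theta$ are tangent, hence equal. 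I expect the main obstacle to be the splitting step — rigorously showing that the blow-out limit $\Sigma_\infty$ degenerates to a Grim hyperplane rather than to some other convex translator in $S^{n+1}_\theta$ with normal $\nu_\theta$ — since this is where convexity, the sharp slab width, and the rotational symmetry must all be combined, and where the extension of the Spruck–Xiao convexity/rigidity argument does the real work; controlling the geometry uniformly as the normal tends to the extreme direction (so that no curvature escapes to infinity and the limit is nonempty and nondegenerate in the orthogonal directions) is the delicate point.
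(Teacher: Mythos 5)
There is a genuine gap at the heart of your proposal: you do not actually establish that the asymptotic normal direction reaches $\theta$, and the step where you assert that ``the slab width pins [the limit] down to be precisely $\Gamma^n_{\theta,\phi}$'' is not correct as stated. What is relatively routine (and you sketch it correctly) is that the translates $\Sigma^n_{\theta,\omega}$ converge subsequentially to a convex translator which splits off a line in the $\phi$-direction and, by rotational symmetry, $n-2$ further lines, hence is an oblique Grim hyperplane $\Gamma^n_{\overline\omega,\phi}$ of \emph{some} angle $\overline\omega\le\theta$, where $\overline\omega:=\sup\{\omega:\sin\omega\,\phi-\cos\omega\,e_{n+1}\in\nu(\Sigma)\}$. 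The nontrivial point is that $\overline\omega=\theta$. Nothing about $\Sigma^n_\theta$ filling the slab $S^{n+1}_\theta$ in its ``middle region'' prevents its asymptotic caps from being Grim hyperplanes of \emph{strictly smaller} width $\pi\sec\overline\omega<\pi\sec\theta$; such a Grim hyperplane still lies in $S^{n+1}_\theta$, so the slab constraint alone does not force the width up. Your identification of ``the splitting step'' as the main obstacle misdiagnoses the difficulty: the splitting is essentially automatic from convexity and the degenerating normal direction; the real work is ruling out $\overline\omega<\theta$.

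The paper closes this gap with a quantitative barrier argument that is absent from your sketch. If $\overline\omega<\theta$, a width estimate (Lemma \ref{lem:width}) for the level sets of $\Sigma$ --- obtained by linearly interpolating, via convexity, between the ``edge'' behavior (width $\pi\sec\overline\omega$) and the ``middle'' behavior (width $\pi\sec\theta$) --- shows that $\Sigma$ is pinched enough at large heights that a suitably chosen rotationally symmetric ancient pancake $\Pi^\omega_t$ of width $\pi\sec\omega$, for some $\omega\in(\overline\omega,\theta)$, lies strictly above $\Sigma_t=\Sigma+te_{n+1}$ for $-t$ large (Claim \ref{above}); since both flows reach the origin at time zero, this contradicts the avoidance principle. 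You would need some such comparison argument (or an equivalent rigidity input) before the first half of your proposal is complete. Your Alexandrov-reflection argument for the symmetry across $\{0\}\times\R^n$ is in line with Corollary \ref{cor:reflex}, and is fine once the asymptotics are in hand, since the two asymptotic Grim caps being reflections of one another is exactly what makes the moving-plane argument start at infinity.
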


We note that this result was already obtained by Spruck and Xiao when $n=2$ using different methods \cite{SX}. Furthermore, the translators we construct in this paper satisfy the hypotheses of Theorem~\ref{thm:asymptotics} and thus Theorem~\ref{thm:asymptotics} can be applied to show that they are reflection symmetric across the midplane of the slab and asymptotic to the `correct' oblique Grim hyperplanes.

The rotational symmetry hypothesis --- which is not required when $n=2$ --- may be necessary in higher dimensions: It is conceivable that there exist convex translators in the slab $S^4_\theta\subset\R^4$, for example, which are asymptotic to an `oblique' $\Sigma_\theta^2\times \R$, where $\Sigma^2_\theta\subset \R^3$ is the translator from Theorem \ref{thm:existence}.

\section*{Acknowledgements}
We would like to thank Joel Spruck and Ling Xiao for helpful discussions about their work.

\section{Compactness}

Recall that, given a smooth function $u$ over a domain $\Omega\subset\R^n$, the downward pointing unit normal $\nu$ and the mean curvature $H[u]$ of $\graph u$ are given by
\[
\nu=\frac{(Du,-1)}{\sqrt{1+|Du|^2}}\quad\text{and}\quad H[u]=\div\left(\frac{Du}{\sqrt{1+|Du|^2}}\right)
\]
respectively. So $\graph u$ is a translator (possibly with boundary) when
\begin{equation}\label{eq:graph_translator}
\div\left(\frac{Du}{\sqrt{1+|Du|^2}}\right)=\frac{1}{\sqrt{1+|Du|^2}}\,.
\end{equation}

In this section, we will derive uniform $C^{1,\alpha}$  estimates for certain hypersurfaces that are given by the the graphs of the Dirichlet problem,
\begin{equation}\label{eq:DP}
\begin{split}
\Div\left(\frac{Du}{\sqrt{1+|Du|^2}}\right)&=\frac{1}{\sqrt{1+|Du|^2}}\;\text{ in }\;\Omega\\
u&=\psi\;\text{ on }\;\partial \Omega\,,
\end{split}
\end{equation}

where $\Omega$ is a bounded open convex subset of $\R^{n+1}$ with $C^{1,\a}$ boundary and $\psi:\partial\Omega\to \R$ is a $C^{1,\a}$ function for some $\a\in (0,1]$. 



By Allard's regularity Theorems \cite{Allardinterior,Allardboundary, Bourni1}, the desired estimates are a consequence of the following lemma, where the usual dimension restriction is circumvented here due to the rotational symmetry of the solutions (cf. Remark \ref{rmk:Simons} below).

\begin{lemma}\label{lem:regularity}
Given any $\e, K>0$ there exists $\l_0=\l_0(\e,K)$ with the following property: Let $u$ be a solution of \eqref{eq:DP}, with $\partial\Omega$ and $\psi$ being rotationally symmetric with respect to the subspace $\mathbb{E}^{n-1}:=\Span\{e_2,\dots,e_n\}$ and having $C^{1,\a}$ norms bounded by $K$. For any $p\in\graph u$ and $\l\le\l_0$ the following statements hold.

If $  B^{n+1}_{\l }(p)\cap \graph \psi=\emptyset$ then
\begin{equation}\label{P-close-area}
\o_n^{-1}\l^{-n}\H^n(\graph u\cap B_\l^{n+1}(p))\le 1+\e\,.
\end{equation}

If $p\in\graph \psi$ then 
\begin{equation}\label{P-close bdry}
\o_n^{-1}\l^{-n}\H^n(\graph u\cap B_\l^{n+1}(p))\le \frac12+\e\,.
\end{equation}
\end{lemma}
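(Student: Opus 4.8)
The plan is to obtain \eqref{P-close-area} and \eqref{P-close bdry} from the monotonicity formula together with a blow-up argument, the rotational symmetry entering decisively in the last step to force the blow-up limits to be flat in all dimensions. Since $\graph u$ satisfies the translator equation, it is, as an integral $n$-varifold, stationary for the weighted area $\int e^{x_{n+1}}\,d\H^n$ (equivalently, minimal for the Ilmanen metric $e^{\frac{2}{n}x_{n+1}}\delta$) and has generalised mean curvature bounded by $1$; by the calibration provided by its graphical structure it minimises this weighted area among competitors with the same boundary, and its boundary $\graph\psi$ is a $C^{1,\alpha}$ submanifold whose geometry is controlled by $K$. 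Hence the interior monotonicity formula for varifolds of bounded generalised mean curvature, and its boundary counterpart (cf. Allard's boundary theorem and \cite{Allardboundary,Bourni1}), hold along $\graph u$ with constants depending only on $K$; in particular $e^{C\rho}\omega_n^{-1}\rho^{-n}\H^n(\graph u\cap B_\rho(p))$ is non-decreasing in $\rho$ up to the scale at which $B_\rho(p)$ first meets $\graph\psi$, with the obvious modification at boundary points, so it suffices to prove \eqref{P-close-area}, resp. \eqref{P-close bdry}, at a single scale $\lambda=\lambda_0(\epsilon,K)$ (after shrinking $\epsilon$). Comparing $\graph u\cap B_\rho(q)$ with the cone over $\graph u\cap\partial B_\rho(q)$ with vertex $q$, and using that the weight varies by a bounded factor on $B_\rho(q)$ together with the $C^{1,\alpha}$ control of $\graph\psi$ near the boundary, also gives a scale-invariant bound $\H^n(\graph u\cap B_\rho(q))\le C(K)\rho^n$ for all $q\in\graph u$ and $\rho\le\rho_0(K)$, which will be used only to guarantee compactness.

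Suppose now that the Lemma failed for some $\epsilon,K$. Then there would be solutions $u_k$ with data bounded by $K$, points $p_k\in\graph u_k$, and scales $\lambda_k\to 0$ violating one of \eqref{P-close-area}, \eqref{P-close bdry} at scale $\lambda_k$. The rescaled hypersurfaces $\Sigma_k:=\lambda_k^{-1}(\graph u_k-p_k)$ have generalised mean curvature at most $\lambda_k\to 0$ and uniformly bounded area ratios, so a subsequence converges as integral varifolds to a stationary integral varifold $V_\infty$ with $0\in\spt V_\infty$ and density ratio at least $1+\epsilon$ (resp. $\frac12+\epsilon$) at scale $1$; since the rescaled weights $e^{\lambda_k x_{n+1}}$ tend to a constant, $V_\infty$ is in fact area-minimising, and it is a limit of single-valued graphs over convex domains converging to a half-space, a slab, or all of $\R^n$. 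In the interior case $\graph\psi_k$ recedes from $0$, so $V_\infty$ is boundaryless near $0$; in the boundary case the $C^{1,\alpha}$ bound forces $\graph\psi_k$ to converge to a hyperplane through $0$, and $V_\infty$ is stationary for the corresponding fixed-boundary problem. Finally, each $\graph u_k$ is $SO(n-1)$-invariant about the $2$-plane $\Span\{e_1,e_{n+1}\}$, so $\Sigma_k$ is $SO(n-1)$-invariant about a translate of it; passing to a further subsequence, either that plane stays at bounded distance from $0$, so $V_\infty$ is $SO(n-1)$-invariant, or its distance to $0$ diverges, so $V_\infty$ splits off a Euclidean factor $\R^{n-2}$. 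Either way $V_\infty$ is determined by a $2$-dimensional \emph{profile}: an area-minimising $2$-varifold in $\R^3$ which is a limit of single-valued minimal graphs over convex planar domains with affine boundary data (meeting the symmetry plane orthogonally in the symmetric case). For this $2$-dimensional problem the regularity theory is unobstructed --- the singular set of an area-minimising $2$-varifold in $\R^3$ is empty, cf. Remark~\ref{rmk:Simons} --- and the classical theory of minimal graphs over convex planar domains (uniqueness in the Dirichlet problem) forces the profile, hence $V_\infty$, to be a single hyperplane (resp. half-hyperplane) through $0$, of multiplicity $1$ since each $\Sigma_k$ is a single-valued graph. Then the density ratio of $V_\infty$ at $0$ and scale $1$ is exactly $1$ (resp. $\frac12$), contradicting the lower bound, and the Lemma follows.

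The main difficulty is this last step: passing rigorously from the $SO(n-1)$-symmetric limit $V_\infty$ to its $2$-dimensional profile, controlling the various degenerate configurations (symmetry plane at finite versus infinite distance, the behaviour of the boundary in the second case, the possible presence of vertical cylinder pieces, and the multiplicity), and verifying that $2$-dimensional regularity together with the uniqueness of minimal graphs over convex planar domains do force flatness irrespective of the ambient dimension --- precisely the point at which the rotational symmetry circumvents the usual dimension restriction.
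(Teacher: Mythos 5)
Your overall strategy -- contradiction, rescale, extract an area-minimizing blow-up limit, and then use rotational symmetry to classify that limit -- is the same as the paper's, and you correctly handle the regime where the rotation axis recedes to infinity (where the limit splits off an $\R^{n-2}$ factor and the quotient is genuinely an area-minimizing $2$-varifold). But there is a real gap in the other, more delicate regime, which the paper treats as its Case (2): when the rotation axis $\Span\{e_1,e_{n+1}\}$ stays at bounded distance from the base points $p_k$. In that case the blow-up limit $V_\infty$ is $SO(n-1)$-invariant about a $2$-plane passing at finite distance, and its quotient by the rotation group is \emph{not} an area-minimizing $2$-varifold in $\R^3$. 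Rather, it is stationary for the weighted functional $\int r^{n-2}\,d\H^2$ (the orbit-volume weight), and the $2$-dimensional regularity theory and the uniqueness of minimal graphs over convex planar domains that you invoke do not apply as stated; the "meeting the symmetry plane orthogonally" qualifier does not repair this, since the issue is the degenerate weight along the axis, not the boundary condition. You flag the passage from $V_\infty$ to a $2$-dimensional profile as "the main difficulty," but the resolution offered (that the profile is an area-minimizing $2$-varifold) is incorrect precisely in this case.

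The paper closes this gap by a genuinely different mechanism. In Case (2), rather than quotienting, it blows the limit current $T$ \emph{down} to a tangent cone at infinity $C$. The cone is area-minimizing and rotationally symmetric, so the link $\spt C\cap S^n$ is an embedded minimal hypersurface of $S^n$ whose rotational symmetry forces it to have at most two distinct principal curvatures at each point. Theorem~1.5 of Andrews--Huang--Li~\cite{AnHuLi15} then classifies it as either an equatorial $S^{n-1}$ or a Clifford torus, and the Clifford torus is excluded because $C$ arises as a limit of graphs. Multiplicity one then follows from the area comparison in Lemma~\ref{area-volume}, and the monotonicity formula applied upward from the planar cone forces $T$ itself to be a hyperplane. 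This classification step is a substantive external input that your proposal is missing and cannot be substituted by two-dimensional regularity. To repair your argument along the lines you sketch, you would need either to invoke a regularity theory for the $r^{n-2}$-weighted minimal surface problem (which is nontrivial), or replace the profile reduction in the symmetric case with a cone argument and a classification result such as the one used in the paper.
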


\begin{proof}

Arguing by contradiction, assume that the conclusion is not true. Then there exist $\e_0>0$ and $K_0>0$,  sequences of rotationally symmetric domains $\Omega_i$ and boundary data $\psi_i:\pd\Omega_i\to\R$  bounded in $C^{1,\alpha}$ by $K_0$, corresponding solutions $u_i$ of the Dirichlet problem \eqref{eq:DP}, points $p_i\in\graph u_{i}$ and scales $\l_i\downarrow 0$ such that   \eqref{P-close-area}  (or  \eqref{P-close bdry} in case $p_i\in \graph \psi_i$) of the lemma with this $\e_0$ and with $u=u_i$, $p=p_i$ and $\l=\l_i$ fails for all $i$, namely
\begin{equation}\label{contr}
\o_n^{-1}{\l_i}^{-n}\H^n(\graph u_i\cap B_{\l_i}^{n+1}(p_i))> 1+\e_0\,.
\end{equation}

Set $\wt{\Omega}_i=\eta_{p_i,\l_i}(\Omega_i)$, $\Psi_i:=\graph\psi_i$ and $\wt{\Psi}_i=\eta_{p_i,\l_i}(\Psi_i)$, where $\eta_{p,\l}(y)=\l^{-1}(y-p)$. We define the current  $\wt{T}_i=\eta_{p_i,\l_i\#}(T_i)$, where $T_i=[\![\graph u_i]\!]$ and note that $\wt{T}_i=[\![\graph\wt{u}_i]\!]$, where $\wt{u}_i\in C^{1,\a}(\wt{\Omega}_i)$ is defined by $\wt{u}_i(p)=\eta_{p_i,\l_i}(u_i(\l_ip+p_i))$ and whose mean curvature satisfies
\[\wt{H}_i(p)=\l_i H_i(x_i+\l_ip)\le \l_i\implies \|\wt{H}_i\|_{0, \wt \Omega_i}\stackrel{i\to\infty}{\longrightarrow}  0.\]
It follows, after passing to a subsequence, that
\begin{enumerate}[(i)]
\item \label{item:currentconvergence} $\wt{T}_i\to T$ in the weak sense of currents, where $T$ is area minimizing;
\item \label{item:measureconvergence} $\mu_{\wt{T}_i}\to\mu_T$ as Radon measures, where $\mu_{\wt T_i}$ and $\mu_T$ denote the total variation measures of $\wt T_i$ and $T$ respectively.
\end{enumerate}
See for instance~\cite[Lemma 2.15]{Bourni} or~\cite[Theorem 34.5]{Simon} for details.

By the measure convergence (\ref{item:measureconvergence}), for every $\e>0$ there exists $i_0$ such that, for all $i\ge i_0$,
\[
\l_i^{-n}\mu_{T_i}(B^{n+1}_{\l_i}(p_i))=\mu_{\wt{T}_i}(B^{n+1}_1(0))\le \mu_{T} (B^{n+1}_1(0))+\e\,.
\]

We consider the following three cases for the sequence of points $p_i$.
\begin{enumerate}
\item $p_i\in \Psi_i=\partial \graph u_i$. 
\item $p_i=(x_i, y_i, u(x_i, y_i))\notin \Psi_i$, $y_i\in \R^{n-1}$ with $|y_i|=0$ for all $i$ and $\liminf_i\dist(p_i, \Psi_i)\neq0$.
\item $p_i=(x_i, y_i, u(x_i, y_i))\notin \Psi_i$, $y_i\in \R^{n-1}$ with $\liminf_i|y_i|\neq0$ and $\liminf_i\dist(p_i, \Psi_i)\neq0$.
\end{enumerate}

To prove Lemma~\ref{lem:regularity} it suffices to show the following claim.

\begin{claim}~\label{plane}
The current $T$ is  a hyperplane, when Case (2) or (3) holds, or a half-hyperplane, when Case (1) holds. 
\end{claim}
\begin{proof}
We first prove that $T$ is a hyperplane when Case (2) holds, that is when $p_i=(x_i, y_i, u(x_i, y_i))\notin \Psi_i$ with   $y_i=0\in \R^{n-1}$ and $\liminf_i\dist(p_i, \Psi_i)\neq0$. In this case the support of the area minimizing current $T$ is rotationally symmetric in the $y$-space. Note that as a consequence of the divergence theorem applied to the normals of the graphs (extended to be independent of the $e_{n+1}$ direction) in an appropriately chosen domains, the following lemma is true (see \cite[Lemmas 2.10, 2.12]{Bourni} for a proof).

\begin{lemma}\label{area-volume}
There exists a constant $c$ such that for any $i$,  $p\in \ov \Omega_i\times\R$ and $\r>0$ the following hold:
Let  $\s\in(0,1)$, $Q_{\r,\s}=[-\s\r,\s\r]\times B_\r^n(0)$ and $q$ an orthogonal transformation of $\R^{n+1}$ such that $q(0)=p$. Then
\[
\H^n(\graph u_i\cap q(Q_{\r,\s}))\le \o_n\r^{n}(1+c\s(n+\r\|H_i\|_{0})).\]
\end{lemma}

By the nature of the measure convergence, Lemma~\ref{area-volume} and the interior monotonicity formula~\cite{Allardinterior} (see also \cite[Section 17]{Simon}) we obtain that

\begin{equation}\label{arearatios}
\begin{split}
1
\le \o_n^{-1}r^{-n}\mu_T(B^{n+1}_r(p))&=\o_n^{-1}r^{-n}\lim_i\mu_{{\wt T}_i}(B^{n+1}_r(p))\\
&=\o_n^{-1}(\l_ir)^{-n}\lim_i\mu_{{ T}_i}(B^{n+1}_{\l_ir}(p))\le 1+cn
\end{split}
\end{equation}
for all $p\in\spt T$ and any $r>0$, where $c$ is  independent of $i$.
Thus, for a sequence $\{\Lambda_k\}\uparrow\infty$, we can apply the Federer--Fleming compactness theorem \cite{FeFl60} (see also \cite[Theorem 32.2]{Simon}) to the sequence $T_{0,\Lambda_k}=\eta_{0,\Lambda_k\#}T$; after passing to a subsequence, this yields $T_{0,\Lambda_k}\to C$ 
in the weak sense of currents, where $\spt C$ is an area minimizing cone rotationally symmetric in the $y$-space, and $\mu_{T_{p,\Lambda_k}}\to \mu_C$ as radon measures.  Since $\spt C\cap S^n$ is an embedded minimal surface in $S^n$ which has at most two distinct principal curvatures at each point, by Theorem 1.5 in~\cite{AnHuLi15} it must be either $S^{n-1}$ or the Clifford torus, $S^1_{\frac{1}{\sqrt{n-1}}}\times S^{n-2}_{\sqrt{\frac{n-2}{n-1}}}$. The latter is not possible because it cannot be obtained from a limit of graphs, so we may conclude that, after applying a rigid motion, $C=m[\![\R^n\times\{0\}]\!]$, with $m\in\mathbb N$. We claim that in fact $m=1$.

Arguing by contradiction, assume that $m\geq2$. Then $\mu_C(Q_{1,\s})=m\o_n\geq 2\o_n$, where  $Q_{1,\s}$ is defined in Lemma~\ref{area-volume}. Let $\sigma=\frac1{2cn}$ in Lemma~\ref{area-volume} then, by the nature of the measure convergence, we have that 
\[
2\o_n\leq \mu_C(Q_{1,\s})\leq \o_n(1 +\frac12).
\]

This contradiction proves that $m=1$ and $C= \R^n\times\{0\}$. Finally, since $C= \R^n\times\{0\}$, the monotonicity formula applied to $T$ gives that
\[
\o_n^{-1}r^{-n}\mu_T(B^{n+1}_r(0))=1\text {  for all  }r>0
\]
which implies that $T$ itself is a hyperplane. This finishes the proof of Claim~\ref{plane} when Case (2) holds. 

We now prove that $T$ is a half-hyperplane when Case (1) holds, that is when $p_i\in \Psi_i=\partial \graph u_i$. The idea of the proof is similar to the proof of Case (2) and the details can be found in~\cite[Lemma 2.15, Lemma 2.16]{Bourni}. For completeness, we provide a sketch. 

Since $\Omega_i$ is a convex domain and the $C^{1,\a}$ norms of $\partial\Omega_i$ and $\psi_i$ are uniformly bounded by $K_0$, it follows that $T$ is an area minizing current contained in a half-hyperspace with boundary a straight line. Applying the Federer--Fleming compactness theorem we obtain a cone at infinity $C$ that satisfies the same property that $T$ satisfies, namely $C$ is contained in a half-hyperspace with boundary a straight line. This being the case, a result of Allard \cite{Allardinterior} (see also \cite[Appendix B]{Bourni}) shows that $C$ is a half-hyperplane with multiplicity $m$ and, as in the proof of Case (2), it remains to show that $m=1$. This follows using the same arguments that we have used in Case (2) after noting that the estimates described in Lemma~\ref{area-volume} can be improved when $p$ is a boundary point. Namely, 
\[
\H^n(\graph u_i\cap q(Q_{\r,\s}))\le \o_n\r^{n}\left(\frac12+c\s(n+\r\|H_i\|_{0})\right).\]
This finishes the sketch of the proof of Claim~\ref{plane} when Case (1) holds.

It remains to prove that $T$ is a hyperplane when Case (3) holds, that is $p_i=(x_i, y_i, u(x_i, y_i))\notin \Psi_i$, $y_i\in \R^{n-1}$ with $\liminf_i|y_i|\neq0$ and $\liminf_i\dist(p_i, \Psi_i)\neq0$.   After passing to a subsequence we can assume that $\lim|y_i|=|y_\infty|$ exists, with $|y_\infty|\in (0, \infty]$. Rotational symmetry of $\graph u_i$ in the $y$-space then implies that $T=[\![\R^{n-2}]\!]\times T_0$, 
where $T_0$ is an area minimizing 2-current in $\R^3$. Since any such current is regular,  $T_0$, and hence also $T$, is regular. We conclude that $\spt T_0$ must be a plane \cite{cp1,po1,sc3} with (arguing as in Case (2)) multiplicity one. This finishes the proof of Claim~\ref{plane} when Case (3) holds and indeed, the proof of Claim~\ref{plane} is finished.
\end{proof}

Recall by the measure convergence, for every $\e>0$ there exists $i_0$ such that, for all $i\ge i_0$,
\[
\l_i^{-n}\mu_{T_i}(B^{n+1}_{\l_i}(p_i))=\mu_{\wt{T}_i}(B^{n+1}_1(0))\le \mu_{T} (B^{n+1}_1(0))+\e\,.
\]
If either Case (2) or (3) holds, then $T$ is a hyperplane and 
\[
\l_i^{-n}\mu_{T_i}(B^{n+1}_{\l_i}(p_i))=\mu_{\wt{T}_i}(B^{n+1}_1(0))\le \o_n+\e\,.
\]
By taking $\e<\e_0$ this contradicts \eqref{contr}. A similar contradiction can be obtained when Case (1) holds. This finishes the proof of Lemma~\ref{lem:regularity}.
 \end{proof}
 
Lemma \ref{lem:regularity} allows us to apply Allard's interior and boundary regularity theorems \cite{Allardinterior,Allardboundary, Bourni1} to obtain uniform $C^{1,\a}$ estimates for the graphs of solutions $u$ to \eqref{eq:DP} with boundary data that satisfy the hypotheses of Lemma \ref{lem:regularity}. Assuming higher regularity of the boundary data we can apply the  Schauder theory to obtain higher regularity estimates for these graphs.
\begin{corollary} \label{cor:regularity}
Given any $K>0$ and $\ell\in \N$,  there exists a constant $C_\ell$ with the following property: Let $u$ be a solution of \eqref{eq:DP} with $\partial\Omega$ and $\psi$ bounded in $C^{\ell_0,\a}$ by $K$ for some $\ell_0\ge 2$ and $\a\in (0,1]$ and rotationally symmetric with respect to the subspace $\mathbb{E}^{n-1}:=\Span\{e_2,\dots,e_n\}$. Then
\[
\sup_{p\in \graph u}|\nabla ^\ell A(p)|\le C_\ell\quad\text{for all}\quad \ell \in \{0,\dots,\ell_0-2\}\,,
\]
where $A$ is the second fundamental form of $\graph u$ and $\nabla^0A:=A$.
\end{corollary}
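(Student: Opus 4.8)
The plan is to bootstrap from the $C^{1,\alpha}$ estimates furnished by Lemma~\ref{lem:regularity} via Allard's theorems and then run standard elliptic theory. First I would invoke Allard's interior and boundary regularity theorems \cite{Allardinterior,Allardboundary,Bourni1}: given the density ratio bounds \eqref{P-close-area} and \eqref{P-close bdry} together with the smallness of $\|\wt H_i\|_0$, these yield that each $\graph u$ is, near every point, a $C^{1,\alpha}$ graph over its tangent hyperplane with uniformly controlled norm, the control being quantitative in terms of $K$ and the $C^{1,\alpha}$ bounds on $\partial\Omega$ and $\psi$. In particular $|Du|$ is bounded a priori on all of $\Omega$, so the translator equation \eqref{eq:graph_translator} is uniformly elliptic on $\graph u$ when written in terms of the graph, and the coefficients inherit a $C^{0,\alpha}$ bound from the $C^{1,\alpha}$ control.

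Next I would set up a Schauder iteration. Writing \eqref{eq:DP} in non-divergence form, $a^{ij}(Du)\,D_{ij}u = f(Du)$ with $a^{ij}$ smooth and uniformly elliptic on the relevant gradient range and $f$ smooth, the $C^{1,\alpha}$ bound gives $a^{ij}, f \in C^{0,\alpha}$ with controlled norm. Interior Schauder estimates then upgrade $u$ to $C^{2,\alpha}_{\mathrm{loc}}$; boundary Schauder estimates (using that $\partial\Omega$ and $\psi$ are $C^{\ell_0,\alpha}$ with $\ell_0 \ge 2$) give the bound up to the boundary. Differentiating the equation and feeding the improved regularity back into the coefficients, one continues: each derivative of $u$ satisfies a linear elliptic equation whose coefficients are controlled in the Hölder norm just obtained, so one picks up one more derivative at each stage, terminating at $C^{\ell_0,\alpha}$ since that is the regularity of the data. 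Translating $C^{\ell_0,\alpha}$ control of $u$ into bounds on $\nabla^\ell A$ for $\ell \le \ell_0 - 2$ is then a routine computation, since $A$ and its covariant derivatives are universal smooth expressions in $D^2 u, \dots, D^{\ell+2} u$ and $Du$, with denominators bounded below thanks to the gradient bound.

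The main subtlety — and the reason Lemma~\ref{lem:regularity} is doing the heavy lifting — is the passage from the measure-theoretic hypotheses to an honest graphical $C^{1,\alpha}$ bound that is uniform over the whole family, including at boundary points: Allard's boundary theorem requires the density-ratio bound $\tfrac12 + \epsilon$ near $\partial\Omega$, precisely \eqref{P-close bdry}, and one must check that its hypotheses on the boundary curve (here $\graph\psi$) are met uniformly, which is where the $C^{1,\alpha}$ bound $K$ on $\psi$ and $\partial\Omega$ enters. Once that $C^{1,\alpha}$ bound is in hand, everything downstream is classical linear theory; the higher-order estimates do not require any further geometric input, in particular no use of the rotational symmetry (which was needed only to make Lemma~\ref{lem:regularity} work in high dimensions). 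I therefore expect no real obstacle beyond correctly citing and applying Allard's boundary regularity theorem in the form of \cite{Bourni1}, and then the Schauder bootstrap, which I would present compactly rather than in full detail.
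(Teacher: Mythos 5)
Your approach is the same as the paper's, which is itself stated only in outline: Lemma~\ref{lem:regularity} gives the density-ratio hypotheses, Allard's interior and boundary theorems upgrade these to uniform $C^{1,\alpha}$ control of $\graph u$, and Schauder theory then bootstraps to higher order, from which the bounds on $\nabla^\ell A$ follow. One small correction: Allard's theorem does \emph{not} give a bound on $|Du|$ over $\Omega$ — the surface $\graph u$ can be arbitrarily steep relative to the base hyperplane while still admitting a uniform $C^{1,\alpha}$ graphical description over its \emph{tangent} planes. (Compare the Grim reaper, which is a perfectly good $C^{1,\alpha}$ curve but has $|Du|\to\infty$.) So the line ``in particular $|Du|$ is bounded a priori on all of $\Omega$'' does not follow and is not needed; the Schauder bootstrap should be run locally in the graphical coordinates over the tangent plane furnished by Allard, where the equation $a^{ij}(Dw)D_{ij}w = f(Dw,\nu)$ is uniformly elliptic with $C^{0,\alpha}$ coefficients by construction. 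With that adjustment everything else you wrote goes through and matches the paper's intent.
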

\begin{remark} \label{rem:regularity}
If we allow $\ell_0=1$ in Corollary \ref{cor:regularity} then we obtain uniform $C^{1,\a}$ estimates for the graphs of solutions $u$ to \eqref{eq:DP} with boundary data that satisfy the hypotheses of Lemma \ref{lem:regularity}.
\end{remark}

\begin{remark}\label{rmk:Simons} If $n+1\le 7$ then Lemma \ref{lem:regularity}, and hence Corollary \ref{cor:regularity} and Remark \ref{rem:regularity}, still hold without the rotational symmetry hypothesis on the boundary data. To see this, note that the proof of the boundary case (Case 1) of Lemma \ref{lem:regularity} does not make use of the rotational symmetry hypothesis and hence holds in all dimensions without this restriction. To show interior regularity in case $n+1\le 7$ we can refer to known results on regularity of almost minimizing surfaces; see for example \cite{DS93, MaMi}. One can alternatively see this from Cases 2a and 2b in the proof of Lemma \ref{lem:regularity}, since there are no stable non-planar minimal cones in low dimensions \cite{Si68}  (see also \cite{SSY} or \cite[Appendix B]{Simon}).
\end{remark}

\section{Convexity}\label{sec:convexity}

In this section, we prove a generalization of  extend the convexity result of Spruck and Xiao \cite[Theorem 1.1]{SX}. Our proof is a straightforward extension of theirs.

\begin{thm}\label{thm:convexity}
Let $\Sigma\subset \R^{n+1}$ be a mean convex translator with at most two distinct principal curvatures at each point and bounded second fundamental form. Then $\Sigma$ is convex.
\end{thm}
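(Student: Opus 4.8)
The plan is to follow the strategy of Spruck and Xiao \cite{SX}, adapting their argument to the hypothesis of at most two distinct principal curvatures. Since $\Sigma$ is mean convex, write $H>0$ on $\Sigma$ (the case $H\equiv 0$ forces $\Sigma$ to be a hyperplane by the translator equation, hence trivially convex). The idea is to study the smallest principal curvature $\kappa_1 = \lambda_{\min}$ and show that the set $\{\kappa_1 < 0\}$ is empty. Under the assumption of at most two distinct principal curvatures, the shape operator has eigenvalues $\kappa_1$ (with some multiplicity $k$) and $\kappa_2$ (with multiplicity $n-k$), and this algebraic structure simplifies the evaluation of the elliptic operator on symmetric functions of the curvatures; in particular one avoids the delicate multiplicity-one issues that plague the general Sturm-type arguments, and one can work with the smooth function $\kappa_1$ directly (or with a regularized version, e.g. $\kappa_1$ approximated by a smooth symmetric function when multiplicities jump).

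First I would recall the evolution/elliptic identities for translators: treating \eqref{eq:translator} as an elliptic equation, the second fundamental form $A$ satisfies a Simons-type equation, and contracting appropriately one obtains for $H$ the identity $\Delta H + \langle \nabla H, e_{n+1}\rangle^{\top}\text{-terms} + |A|^2 H = 0$ (more precisely $\mathcal{L}H := \Delta H + \nabla_{e^\top} H + |A|^2 H = 0$, where $e^\top$ is the tangential part of $e_{n+1}$ and I am using that $\nabla H$ relates to $A(e^\top,\cdot)$). Next I would derive the corresponding equation for $\kappa_1$. At points where $\kappa_1$ is smooth and simple this is standard; where the top eigenspace has multiplicity $>1$ one uses the standard trick that $\kappa_1 = \min_{|v|=1} A(v,v)$ is a viscosity supersolution of the same operator, because the "bad" gradient terms in Simons' identity have a favorable sign (this is exactly the mechanism in Huisken-Sinestrari and in \cite{SX}). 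The key structural input is that, with only two distinct principal curvatures, the reaction terms governing $\kappa_1$ can be written explicitly in terms of $\kappa_1,\kappa_2$ and are nonnegative when $\kappa_1\le 0$.

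The heart of the argument is then a strong maximum principle / Omori-Yau argument at infinity. Since $\Sigma$ has bounded second fundamental form but is noncompact, $\kappa_1$ need not attain a minimum, so I would apply the Omori-Yau maximum principle (valid here because bounded $|A|$ gives a lower bound on the Ricci curvature of $\Sigma$, via Gauss equation) to the function $\kappa_1/H$, which is bounded below. Computing $\mathcal{L}(\kappa_1/H)$ using the equations for $\kappa_1$ and $H$, the zeroth-order terms involving $|A|^2$ cancel, and one is left — using the two-distinct-principal-curvatures structure to control the gradient terms — with a differential inequality of the form $\mathcal{L}(\kappa_1/H) \ge (\text{good gradient term}) + c\,(\kappa_1/H)(\text{something})$ on the region where $\kappa_1<0$, forcing $\inf(\kappa_1/H)\ge 0$ at an Omori-Yau sequence; hence $\kappa_1\ge 0$ everywhere, i.e. $\Sigma$ is convex. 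If $\kappa_1/H$ attains the value $0$ somewhere, the strong maximum principle (again using that $\kappa_1$ is a supersolution) propagates it and one concludes $\Sigma$ splits off a line or is a cylinder over a lower-dimensional convex translator — in any case convex.

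The main obstacle I expect is handling the gradient terms in the equation for $\kappa_1/H$ at points where the multiplicity of the smallest eigenvalue jumps, since there $\kappa_1$ is only Lipschitz. The clean resolution is to run the whole argument in the viscosity sense: $\kappa_1$ is a viscosity supersolution of the relevant operator (this is where the "at most two distinct principal curvatures" hypothesis does real work, guaranteeing that the Codazzi-type gradient terms appearing when one tests with a smooth function touching from below have the correct sign, following the computation in \cite[Section 2]{SX}), and the Omori-Yau principle has a viscosity formulation that applies to $\kappa_1/H$ directly. A secondary point to check is the legitimacy of Omori-Yau on $\Sigma$: this needs only a lower Ricci bound, which follows from boundedness of $|A|$ via the Gauss equation, so no completeness-at-infinity pathology arises. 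Apart from these, the computation is, as the authors note, a routine extension of \cite{SX}.
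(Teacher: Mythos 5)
Your overall strategy is the correct one (it is the same Spruck--Xiao scheme the paper extends), but there are two substantive discrepancies with how the argument actually has to run.

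First, the viscosity-solution machinery you invoke to handle jumps in the multiplicity of $\kappa_1$ is unnecessary, and recognizing why is where the ``at most two distinct principal curvatures'' hypothesis actually earns its keep. On the open set $U=\{\kappa<0\}$ one has $\kappa<0<\mu$ (by mean convexity), and since there are exactly two distinct eigenvalues separated by $0$, the multiplicity $m$ of $\kappa$ cannot jump on a connected component of $U$: a change in multiplicity would require an eigenvalue crossing, which is impossible when the eigenvalues are pinned on opposite sides of $0$. Hence $\kappa$ is genuinely smooth on $U$ with constant multiplicity, and one computes classically. The paper makes exactly this observation at the outset; you do not, which is why you drift into the (more delicate and here superfluous) viscosity framework.

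Second, and more seriously, the specific quantity you propose to run Omori--Yau on, $\kappa_1/H$, is not bounded below, and boundedness below is a hypothesis of Omori--Yau. Writing $H=m\kappa+(n-m)\mu$ one has
\[
\frac{\kappa}{H}=\frac{\kappa/\mu}{\,m\,\kappa/\mu+n-m\,}\,,
\]
which tends to $-\infty$ as $\kappa/\mu\to -\tfrac{n-m}{m}$ (i.e. as $H\to 0$). The paper instead works with $\kappa/\mu$ itself, which satisfies the a priori bound $\kappa/\mu\geq-\tfrac{n-m}{m}$ coming directly from $H>0$, and it is precisely this bound that licenses the Omori--Yau application. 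So the choice of test quantity is not a matter of normalization; yours does not come with the required lower bound, and you would need an additional argument (e.g.\ ruling out $H\to 0$ along a minimizing sequence by passing to a limit translator) to patch this. Relatedly, you only sketch the endgame as ``controlling gradient terms forces $\inf\geq 0$,'' but the bulk of the work in the actual proof happens after the Omori--Yau sequence is extracted: one shows, via a careful analysis of the gradient identities (\eqref{eq:gradH}, \eqref{eq:claim}, \eqref{eq:convexity_contra}, \eqref{eq:convexity_contra2}) together with the translator equation, that $\nabla_\ell H/\mu\to 0$, hence $\nu(X_i)\to -e_3$ and $H(X_i)\to 1$, which then reproduces the rigidity contradiction from the interior case. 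You also omit the case distinction where $\inf(\kappa/\mu)<0$ is \emph{attained} (handled by the strong maximum principle and shown to force $\Sigma$ to be a flat hyperplane with $H\equiv 1$, a contradiction); you only discuss the trivial attained-infimum-equals-zero case.
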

\begin{proof}
Denote the principal curvatures of $\Sigma$ by $\kappa\leq \mu$. Note that $\kappa$ is smooth and has constant multiplicity $m\in\{1,\dots,n-1\}$ in the open set $U:=\{X\in \Sigma:\kappa(X)<0\}$. Recall that
\[
-(\cd_V+\Delta)A=|A|^2A\,,
\]
where $V:=e_{n+1}^\top$ is the tangential part of $e_{n+1}$. Computing locally in a principal frame $\{\tau_1,\dots,\tau_n\}$ with $\kappa_i=A_{ii}=\kappa$ when $i\leq m$ and $\kappa_i=A_{ii}=\mu$ when $i\geq m+1$, we obtain
\[
-(\cd_V+\Delta)\kappa=|A|^2\kappa+2\sum_{\ell=1}^n\sum_{p=m+1}^n\frac{(\cd_\ell A_{1p})^2}{\mu-\kappa}\quad\text{in}\quad U\,.
\]
Since the mean curvature satisfies
\[
-(\cd_V+\Delta)H=|A|^2H\,,
\]
straightforward manipulations then yield
\ba\label{eq:SMP}
-(\cd_V+\Delta)\frac{\kappa}{\mu}={}&-(\cd_V+\Delta)\frac{(n-m)\kappa}{H-m\kappa}\nonumber\\
={}&\frac{2}{n-m}\frac{H}{\mu^2}\sum_{\ell=1}^n\sum_{p=m+1}^n\frac{(\cd_\ell A_{1p})^2}{\mu-\kappa}+2\inner{\cd\frac{\kappa}{\mu}}{\frac{\cd\mu}{\mu}}\,.
\ea
Suppose that
\[
-\varepsilon_0:=\inf_{\Sigma}\frac{\kappa}{\mu}<0\,.
\]
If the infimum is attained at some point $X_0\in \Sigma$ then $\kappa(X_0)<0$ and the strong maximum principle yields $\frac{\kappa}{\mu}\equiv -\varepsilon_0<0$. In particular,
\[
0\equiv \cd_\ell\frac{\kappa}{\mu}=\frac{\cd_\ell A_{pp}}{\kappa}-\frac{\kappa}{\mu}\frac{\cd_\ell A_{qq}}{\mu}
\]
when $p\leq m<q$. It is a general observation that
\ba\label{eq:twocurvatures}
0=\tau_\ell A_{ij}=\cd_\ell A_{ij}+(\kappa_j-\kappa_i)\Gamma_{\ell ij}=\cd_\ell A_{ij}
\ea
for each $\ell$ whenever $\kappa_i=\kappa_j$ and $i\neq j$, where $\Gamma_{\ell ij}:=\inner{\cd_\ell\tau_i}{\tau_j}$. Thus\footnote{Here, and elsewhere, we freely make use of the Codazzi identity.},
\[
0=\cd_\ell A_{11}\;\; \text{when}\;\; \ell =2,\dots,m
\]
and
\[
0=\cd_\ell A_{nn}\;\; \text{when}\;\; \ell=m+1,\dots,n-1\,.
\]
Recalling \eqref{eq:SMP}, we also find that
\[
0\equiv \sum_{\ell=1}^n\sum_{p=m+1}^n(\cd_\ell A_{1p})^2\,.
\]
It follows that the components $\cd_1A_{nn}$, $\cd_1A_{11}$, $\cd_nA_{11}$ and $\cd_nA_{nn}$ are all identically zero and hence, by the translator equation \eqref{eq:translator},
\[
0\equiv m\cd_\ell A_{11}+(n-m)\cd_\ell A_{nn}=\cd_\ell H=-\kappa_\ell\inner{\tau_\ell}{e_3}
\]
for each $\ell=1,\dots,n$. It follows that $\nu\equiv -e_3$ and hence $\Sigma$ is a hyperplane satisfying $H\equiv 1$, which is absurd.

Suppose then that the infimum is not attained. Since $\frac{\kappa}{\mu}\geq-\frac{n-m}{m}$ and the sectional curvatures of $\Sigma$ are bounded, the Omori--Yau maximum principle may be applied. This yields a sequence of points $X_i\to\infty$ such that
\ba\label{eq:OYMP}
\frac{\kappa}{\mu}(X_i)\to -\varepsilon_0\,,\quad \left\vert\cd\frac{\kappa}{\mu}(X_i)\right\vert\leq \frac{1}{i}\quad\text{and}\quad -\Delta\frac{\kappa}{\mu}(X_i)\leq \frac{1}{i}\,.
\ea
Consider the sequence of translators $\Sigma_i:=\Sigma-X_i$. Since $|A|$ is bounded, the translators $\Sigma_i$ converge locally uniformly in $C^\infty$, after passing to a subsequence, to a limit translator $\Sigma_\infty$. Note that, whenever $\kappa<0<\mu$,
\ba\label{eq:gradH}
\cd_\ell\frac{m\kappa}{\mu}={}&\frac{m\cd_\ell A_{11}}{\mu}-\frac{m\kappa}{\mu^2}\cd_\ell A_{nn}\nonumber\\
={}&\frac{\cd_\ell H}{\mu}-m\left(\frac{n-m}{m}+\frac{\kappa}{\mu}\right)\frac{\cd_\ell A_{nn}}{\mu}\,.
\ea
We claim that
\ba\label{eq:claim}
\left(\frac{n-m}{m}+\frac{\kappa}{\mu}(X_i)\right)\frac{\cd_kA_{nn}}{\mu}(X_i)\to 0\;\;\text{as}\;\;i\to\infty
\ea
for each $\ell=1,\dots,n$. Suppose that this is not the case. Then there exists $i_0\in\N$ and $\delta_0>0$ such that
\ba\label{eq:convexity_contra}
\left(\frac{n-m}{m}+\frac{\kappa}{\mu}(X_i)\right)\frac{\vert\cd_kA_{nn}\vert}{\mu}(X_i)>\delta_0
\ea
for all $i>i_0$ and some $k\in\{1,\dots,n\}$. By \eqref{eq:OYMP},
\[
\left(\frac{\cd_\ell A_{11}}{\mu}-\frac{\kappa}{\mu}\frac{\cd_\ell A_{nn}}{\mu}\right)(X_i)\to 0
\]
for each $\ell=1,\dots,n$ as $i\to\infty$ so that, replacing $\delta_0$ and $i_0$ if necessary,
\ba\label{eq:convexity_contra2}
\left(\frac{n-m}{m}+\frac{\kappa}{\mu}(X_i)\right)\frac{\vert\cd_kA_{11}\vert}{\mu}(X_i)>\delta_0
\ea
for all $i>i_0$. Moreover, by \eqref{eq:twocurvatures},
\[
\frac{\cd_\ell A_{nn}}{\mu}(X_i)\to 0
\]
as $i\to\infty$ for all $\ell=2,\dots,n-1$. So \eqref{eq:convexity_contra} (and hence also \eqref{eq:convexity_contra2}) holds with $k\in\{1,n\}$. Combining \eqref{eq:OYMP} and \eqref{eq:SMP} we obtain, at the point $X_i$,
\bann
\frac{1}{i}\geq{}&\frac{1}{n-m}\frac{H}{\mu-\kappa}\sum_{\ell=1}^n\sum_{p=m+1}^n\frac{(\cd_\ell A_{1p})^2}{\mu^2}+\inner{\cd\frac{\kappa}{\mu}}{\frac{\cd\mu}{\mu}}\\
={}&\frac{1}{n-m}\frac{H}{\mu-\kappa}\left(\sum_{p=m+1}^{n}\frac{(\cd_p\kappa)^2}{\mu^2}+(n-m)\frac{(\cd_1\mu)^2}{\mu^2}\right)\\
{}&+\cd_1\frac{\kappa}{\mu}\frac{\cd_1\mu}{\mu}+\sum_{\ell =2}^m\cd_\ell \frac{\kappa}{\mu}\frac{\cd_\ell \mu}{\mu}+\sum_{\ell =m+1}^n\cd_\ell \frac{\kappa}{\mu}\left(\frac{\cd_\ell \kappa}{\kappa}-\frac{\mu}{\kappa}\cd_\ell \frac{\kappa}{\mu}\right)\\
={}&-\frac{\mu}{\kappa}\sum_{\ell =m+1}^n\left(\cd_\ell \frac{\kappa}{\mu}\right)^2+\sum_{\ell =2}^m\cd_\ell \frac{\kappa}{\mu}\frac{\cd_\ell \mu}{\mu}+\cd_1\frac{\kappa}{\mu}\frac{\cd_1\mu}{\mu}+\frac{H}{\mu-\kappa}\frac{(\cd_1\mu)^2}{\mu^2}\\
{}&+\frac{\mu}{\kappa}\sum_{\ell =m+1}^n\cd_\ell \frac{\kappa}{\mu}\frac{\cd_\ell \kappa}{\mu}+\frac{1}{n-m}\frac{H}{\mu-\kappa}\sum_{\ell =m+1}^{n}\frac{(\cd_\ell \kappa)^2}{\mu^2}\\
\geq{}&-\frac{\mu}{\kappa}\sum_{\ell =m+1}^n\left(\cd_\ell \frac{\kappa}{\mu}\right)^2+\sum_{\ell =2}^m\cd_\ell \frac{\kappa}{\mu}\frac{\cd_\ell \mu}{\mu}\\
{}&+\left(m\frac{\frac{n-m}{m}+\frac{\kappa}{\mu}}{1-\frac{\kappa}{\mu}}\frac{\vert\cd_1\mu\vert}{\mu}-\left\vert\cd_1\frac{\kappa}{\mu}\right\vert\right)\frac{\vert\cd_1\mu\vert}{\mu}\\
{}&+\sum_{\ell =m+1}^{n}\left(\frac{m}{n-m}\frac{\frac{n-m}{m}+\frac{\kappa}{\mu}}{1-\frac{\kappa}{\mu}}\frac{\vert\cd_\ell \kappa\vert}{\mu}+\frac{\mu}{\kappa}\left\vert\cd_\ell \frac{\kappa}{\mu}\right\vert\right)\frac{\vert\cd_\ell \kappa\vert}{\mu}\,.
\eann
Suppose that $k=1$ in \eqref{eq:convexity_contra}. If
\[
\left(\frac{n-m}{m}+\frac{\kappa}{\mu}(X_i)\right)\frac{\vert\cd_n\kappa\vert}{\mu}(X_i)\not\to 0\;\;\text{as}\;\;i\to\infty
\]
then, taking $i\to\infty$, we find $\frac{\vert\cd_1\mu\vert}{\mu}(X_i)\to 0$ as $i\to\infty$, contradicting \eqref{eq:convexity_contra}. Else, $\frac{\vert\cd_n\kappa\vert}{\mu}(X_i)\leq \frac{\vert\cd_1\mu\vert}{\mu}(X_i)$ for $i$ sufficiently large and we again obtain $\frac{\vert\cd_1\mu\vert}{\mu}(X_i)\to 0$ as $i\to\infty$, contradicting \eqref{eq:convexity_contra}. If $k=n$ in \eqref{eq:convexity_contra} we may argue similarly, using \eqref{eq:convexity_contra2}.

So \eqref{eq:claim} does indeed hold. Applying \eqref{eq:OYMP} and \eqref{eq:claim} to \eqref{eq:gradH} yields
\[
\frac{\cd_\ell H}{\mu}(X_i)\to 0
\]
for each $\ell=1,\dots,n$. On the other hand, by the translator equation,
\[
\frac{\cd_\ell H}{\mu}=-\frac{\kappa_\ell\inner{\tau_\ell}{e_3}}{\mu}\,.
\]
Since $\frac{\kappa}{\mu}(X_i)\to-\varepsilon_0\neq 0$, we conclude that $\nu(X_i)\to -e_3$ and hence $H(X_i)\to 1$. So we may proceed as in the interior case to obtain a contradiction.
\end{proof}

\begin{remark}
A very similar argument can be used to prove that any two-convex translator (i.e. one satisfying $\kappa_1+\kappa_2>0$) in $\R^{n+1}$ with $n\geq 3$ is convex since in this case $|A|^2\leq nH^2\leq n$ and $\kappa_1$ is smooth and satisfies
\[
-(\cd_V+\Delta)\kappa_1=|A|^2\kappa_1+2\sum_{\ell=1}^n\sum_{\kappa_p>\kappa_1}\frac{(\cd_\ell A_{1p})^2}{\kappa_p-\kappa_1}
\]
wherever it is negative.
\end{remark}

\section{Barriers} In this section we introduce appropriate barriers. When $n=2$, the outer barrier is obtained by (non-isotropically) `stretching' the level set function corresponding to the Angenent oval so that it lies in the correct slab and is asymptotic to the correct oblique Grim planes. The higher dimensional barrier is then obtained by rotating in the $(n-1)$-dimensional complimentary subspace.

\begin{lemma}
The function $\underline u:\{(x,y)\in\R\times\R^{n-1}:|x|<\frac{\pi}{2}\sec\theta\}\to\R$ defined by
\[
\underline u(x,y):=-\sec^2\theta\log\cos\left(\tfrac{x}{\sec\theta}\right)+\tan^2\theta\log\cosh\left(\tfrac{|y|}{\tan\theta}\right)
\]
is a subsolution of the graphical translator equation \eqref{eq:graph_translator}.

In particular, given any $R>0$, the surface
\[
\underline \Sigma_{R}:=\graph \underline u_{R}
\]
is a subsolution of the translator equation \eqref{eq:translator}, where
\[
\underline u_{R}:=\underline u-\tan^2\theta\log\cosh\left(\tfrac{R}{\tan\theta}\right)\,.
\]
\end{lemma}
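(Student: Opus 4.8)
The plan is to verify directly that $\underline u$ is a subsolution of \eqref{eq:graph_translator}, i.e. that
\[
\Div\left(\frac{D\underline u}{\sqrt{1+|D\underline u|^2}}\right)\geq \frac{1}{\sqrt{1+|D\underline u|^2}}\,,
\]
and then observe that adding the constant $-\tan^2\theta\log\cosh(R/\tan\theta)$ does not change $D\underline u$, so $\underline u_R$ is also a subsolution and $\underline\Sigma_R=\graph\underline u_R$ is a subsolution of \eqref{eq:translator}. Since $\underline u$ depends on $x$ and on $r:=|y|$ only, I would first record the radial reduction: writing $\underline u(x,y)=f(x)+g(r)$ with $f(x)=-\sec^2\theta\log\cos(x\cos\theta)$ and $g(r)=\tan^2\theta\log\cosh(r\cot\theta)$, one has $D\underline u=(f'(x),g'(r)\tfrac{y}{r})$, $|D\underline u|^2=f'(x)^2+g'(r)^2$, and the divergence term expands (using $\Div(y/r)=(n-2)/r$ for the angular part) into
\[
\frac{f''}{(1+|D\underline u|^2)^{1/2}}-\frac{f'^2 f''}{(1+|D\underline u|^2)^{3/2}}+\frac{g''}{(1+|D\underline u|^2)^{1/2}}-\frac{g'^2 g''}{(1+|D\underline u|^2)^{3/2}}+\frac{(n-2)}{r}\frac{g'}{(1+|D\underline u|^2)^{1/2}}-\frac{g'^2}{r}\frac{g'}{(1+|D\underline u|^2)^{3/2}}\cdot 0\,,
\]
so after clearing $(1+|D\underml u|^2)^{-3/2}$ the inequality to prove becomes a pointwise algebraic inequality in $f',f'',g',g'',r,n,\theta$.

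Next I would compute the one-variable data. For the Grim-type factor, $f'(x)=\tan(x\cos\theta)\sec\theta$ and $f''(x)=\sec^2(x\cos\theta)$, so $1+f'^2=1+\sec^2\theta\tan^2(x\cos\theta)$ and $f''-f'^2f''$ combines cleanly. For the Angenent-oval factor, $g'(r)=\tan\theta\tanh(r\cot\theta)$ and $g''(r)=\sech^2(r\cot\theta)$, giving $1+g'^2=1+\tan^2\theta\tanh^2(r\cot\theta)$; note $|g'|<\tan\theta$ and $g''>0$ always. I would substitute these, group the terms coming from $f$ and from $g$ separately, and compare against the right-hand side $1+|D\underline u|^2 = 1+f'^2+g'^2$ (after clearing denominators). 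The expectation — this is exactly why these particular functions are chosen — is that the ``stretching'' by $\sec^2\theta$, $\tan^2\theta$ in the respective directions is calibrated so that the cross terms $f'^2g'^2$ and the $n$-dependent term $(n-2)g'/r$ have a definite sign, yielding the inequality with room to spare (the Angenent oval is a shrinker, not a translator, so its contribution is genuinely helpful rather than borderline).

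The routine but slightly delicate step is bookkeeping the $(n-2)/r$ term: near $r=0$ one uses $g'(r)/r\to\cot^2\theta$ (finite), and for large $r$ the term is positive and bounded, so it only helps the inequality; I would either absorb it or simply discard it after checking its sign. The main obstacle I anticipate is organizing the algebra so that the claimed inequality is manifest rather than a miracle of cancellation: the cleanest route is probably to show that the $f$-part alone already produces the ``$-1$'' needed (since a pure Grim factor $-\sec^2\theta\log\cos(x\cos\theta)$ is itself an \emph{exact} solution of the $1$-D translator equation scaled appropriately, contributing exactly the required amount up to the $g'$-correction in the denominator), and that the remaining $g$-terms together with the angular term are nonnegative. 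So I would split the verification as: (i) the Grim factor contributes $\geq$ (right side minus a controlled remainder), using that $-\log\cos$ solves the curve-shortening translator equation; (ii) the $g$-terms plus $(n-2)g'/r$ are $\geq 0$, using $g''>0$, $|g'|<\tan\theta$, and $g'/r\geq 0$; then (iii) add. If the clean split in (i) does not quite close, the fallback is the brute-force substitution described above, reducing everything to verifying $p(s,t)\geq 0$ for an explicit polynomial in $s=\tan^2(x\cos\theta)\geq 0$ and $t=\tanh^2(r\cot\theta)\in[0,1)$, which can be checked by inspection of coefficients.
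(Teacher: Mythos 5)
Your overall plan is the same as the paper's: compute the derivatives of $f(x)=-\sec^2\theta\log\cos(x\cos\theta)$ and $g(r)=\tan^2\theta\log\cosh(r\cot\theta)$, discard the nonnegative angular term $(n-2)g'(r)/r$, and verify the resulting pointwise algebraic inequality. Your fallback ``brute-force'' route does close, and in fact closes very cleanly: with $s=\tan^2(x\cos\theta)$ and $t=\tanh^2(r\cot\theta)$ one has $f''=1+s$, $g''=1-t$, $f'^2=\sec^2\theta\,s$, $g'^2=\tan^2\theta\,t$, and after clearing denominators the quantity to bound from below becomes
\[
f''+g''+f''g'^2+g''f'^2-\bigl(1+f'^2+g'^2\bigr)=(1+s)(1-t)=\sec^2(x\cos\theta)\,\sech^2(r\cot\theta)\ge 0\,,
\]
which is exactly the slack the paper exhibits. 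However, the ``clean split'' you propose in step (i) is based on a false premise and does \emph{not} close: the function $-\sec^2\theta\log\cos(x\cos\theta)$ is not a one-dimensional translator for $\theta>0$ (one has $f''=\sec^2(x\cos\theta)$ but $1+f'^2=1+\sec^2\theta\tan^2(x\cos\theta)\ne f''$). The oblique Grim hyperplane is a graph only after including the linear tilt $\tan\theta\,\langle\phi,y\rangle$, and it is that linear term that supplies the missing $\tan^2\theta$ in the denominator. Concretely, the ``$f$-terms'' $f''(1+g'^2)$ fall short of $1+f'^2+g'^2$ by $s\tan^2\theta(1-t)\ge 0$, so (i) alone is wrong in sign; the deficit is repaired precisely by the ``$g$-terms'' $g''(1+f'^2)$, and only the sum is nonnegative. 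So you should drop the split and go directly to the computation you describe as the fallback, which is what the paper does (the paper streamlines it slightly by first observing the tidy identity $1+|D\underline u|^2=\sec^2\theta\sec^2(x\cos\theta)-\tan^2\theta\sech^2(r\cot\theta)$). The final remark that adding the constant $-\tan^2\theta\log\cosh(R/\tan\theta)$ preserves the subsolution property is correct and matches the paper.
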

\begin{proof}
The relevant derivatives of $\underline u$ are given by
\[
D\underline u=\left(\sec\theta\tan\left(\tfrac{x}{\sec\theta}\right),\tan\theta\tanh\left(\tfrac{|y|}{\tan\theta}\right)\tfrac{y}{|y|}\right)
\]
and
\begin{equation*}
D^2\underline u=\left(\begin{matrix}
\sec^2\left(\tfrac{x}{\sec\theta}\right)&\dots \qquad\qquad 0 \qquad\qquad  \dots\\
\vdots &\\
0& \sech^2\left(\tfrac{|y|}{\tan\theta}\right)\frac{y_iy_j}{|y|^2}+\tan\theta\tanh\left(\tfrac{|y|}{\tan\theta}\right)\left(\frac{\delta_{ij}}{|y|}-\frac{y_iy_j}{|y|^3}\right)\\
\vdots &
\end{matrix}\right)\,.
\end{equation*}
So
\bann
1+|D\underline u|^2={}&1+\sec^2\theta\tan^2\left(\tfrac{x}{\sec\theta}\right)+\tan^2\theta\tanh^2\left(\tfrac{|y|}{\tan\theta}\right)\\
={}&\sec^2\theta\sec^2\left(\tfrac{x}{\sec\theta}\right)-\tan^2\theta\sech^2\left(\tfrac{|y|}{\tan\theta}\right)\,.
\eann
Estimating
\[
\Delta \underline u\geq \sec^2\left(\tfrac{x}{\sec\theta}\right)+\sech^2\left(\tfrac{|y|}{\tan\theta}\right)
\]
we find
\bann
(1+|D\underline u|^2)^{\frac{3}{2}}H[\underline u]={}&(1+|D\underline u|^2)\Delta \underline u-D^2\underline u(D\underline u,D\underline u)\\
\geq{}&1+|D\underline u|^2+\sec^2\left(\tfrac{x}{\sec\theta}\right)\sech^2\left(\tfrac{|y|}{\tan\theta}\right)\\
\geq{}&1+|D\underline u|^2\,.
\eann
\end{proof}

Consider the `outer' domain
\bann
\underline \Omega_{R}:={}&\left\{(x,y)\in S_\theta^{n-1}:\underline u_{R}(x,y)<0\right\}\nonumber\\
={}&\left\{(x,y)\in S_\theta^{n-1}:\cos\left(\tfrac{x}{\sec\theta}\right)<\left[\frac{\cosh\left(\tfrac{|y|}{\tan\theta}\right)}{\cosh\left(\tfrac{R}{\tan\theta}\right)}\right]^{\sin^2\theta}\right\}\,,
\eann
where $S_\theta^{n}:=(-\tfrac{\pi}{2}\sec\theta,\tfrac{\pi}{2}\sec\theta)\times\R^{n-1}$. Note that
\[
\pd \underline \Omega_{R}=\pd\left(\underline \Sigma_{R}\cap \R^n\times(-\infty,0]\right)\,.
\]

The inner barrier is obtained by rotating the Angenent oval of width $\pi\sec\theta$ and cutting off at an appropriate height (see Figure \ref{fig:supersolution}).

\begin{lemma}
Given $R>0$, let $\Pi_{R}\subset \R^{n+1}$ be the surface formed by rotating the time $T:=-\sec^2\theta\cosh\left(\frac{R}{\tan\theta}\right)$ slice of the Angenent oval of width $\pi\sec\theta$ about the $x$-axis. That is,
\[
\Pi_{R}:=\left\{(x,y,z)\in \R\times \R^{n-1}\times \R:v(x,y,z)=T\right\}\,,
\]
where
\[
v:=\sec^2\theta\left[\log\left(\cosh\left(\tfrac{\sqrt{|y|^2+z^2}}{\sec\theta}\right)\right)-\log\left(\cos\left(\tfrac{x}{\sec\theta}\right)\right)\right]\,.
\]
There exists $\varepsilon_0=\varepsilon_0(n,\theta)>0$ such that the sublevel set
\[
\overline \Sigma_{R,\varepsilon}:=\Pi_{R}\cap\left\{z\leq -R\tfrac{\cos(\theta-\varepsilon)}{\sin\theta}\right\}+R\tfrac{\cos(\theta-\varepsilon)}{\sin\theta} e_{n+1}
\]
is a supersolution of the translator equation \eqref{eq:translator} whenever $\varepsilon<\varepsilon_0$ and $R>R_\varepsilon:=\frac{2(n-1)}{\varepsilon}$.
\end{lemma}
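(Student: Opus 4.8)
The plan is to verify the supersolution inequality pointwise by applying the translator operator to the level‑set function that cuts out $\Pi_R$, using crucially that the planar Angenent oval is a curve–shortening flow.

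First I would put the supersolution condition into workable form. Near each of its interior points $\overline\Sigma_{R,\varepsilon}$ is a graph $z=\bar u(x,y)$ over the $(x,y)$‑plane, obtained by intersecting the level set $\{v=T\}$ with $\{z\le -h\}$ and translating by $h:=R\tfrac{\cos(\theta-\varepsilon)}{\sin\theta}$ in the $e_{n+1}$‑direction. Orienting the unit normal $\nu$ to point upward along this sheet, being a supersolution of \eqref{eq:graph_translator} is equivalent to
\[
\div\!\left(\frac{\nabla v}{|\nabla v|}\right)+\left\langle\frac{\nabla v}{|\nabla v|},e_{n+1}\right\rangle\ \le\ 0
\]
at every point $(x,y,z-h)$ of $\Pi_R$ with $z\le -h$; the left‑hand side is the translator ``defect'', which vanishes identically on a genuine translator and is unchanged by the vertical translation. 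Thus the lemma reduces to a pointwise estimate on $\{z\le -h\}\subset\mathbb R^{n+1}$. Since $v$ depends on $(x,y,z)$ only through $x$ and the rotational variable $\rho:=\sqrt{|y|^2+z^2}$, I would split the divergence into an ``$(x,\rho)$‑planar'' part and a ``rotational'' part.

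The planar part is $\div_{(x,\rho)}(\nabla\tilde v/|\nabla\tilde v|)$, where $v=\tilde v(x,\rho)$, and the heart of the matter is the identity
\[
\div_{(x,\rho)}\!\left(\frac{\nabla\tilde v}{|\nabla\tilde v|}\right)=-\frac{1}{|\nabla\tilde v|}\,,\qquad\text{equivalently}\qquad \Delta\tilde v-\frac{\nabla^2\tilde v(\nabla\tilde v,\nabla\tilde v)}{|\nabla\tilde v|^2}=-1\,.
\]
This says exactly that the level curves $\{\tilde v=t\}$ flow by curve shortening with $t$ the flow time --- i.e.\ that they are the time slices of the Angenent oval of width $\pi\sec\theta$ --- and it is proved by a direct computation in which the cross terms collapse via $1+\tan^2=\sec^2$ and $1-\tanh^2=\sech^2$. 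The rotational part contributes $\tfrac{n-1}{\rho}\langle\nu,\partial_\rho\rangle$, where $\partial_\rho:=\rho^{-1}(0,y,z)$ is the unit radial field in the $(y,z)$‑directions and $\langle\nu,\partial_\rho\rangle=|\nabla\tilde v|^{-1}\partial_\rho\tilde v$, while $\langle\nu,e_{n+1}\rangle=|\nabla\tilde v|^{-1}\partial_\rho\tilde v\,\tfrac{z-h}{\rho}$ after the translation. Since $\partial_\rho\tilde v$ is, up to sign, $\sec\theta\,\tanh(\rho/\sec\theta)$, the translator defect equals $|\nabla\tilde v|^{-1}$ times
\[
\pm\left(1+\frac{\sec\theta\,\tanh(\rho/\sec\theta)}{\rho}\,\big[(n-1)+(z-h)\big]\right),
\]
so the lemma amounts to showing that $1+\tfrac{\sec\theta\tanh(\rho/\sec\theta)}{\rho}\big[(n-1)+(z-h)\big]\le 0$ throughout $\{z\le -h\}$.

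To verify this, note that on $\overline\Sigma_{R,\varepsilon}$ one has $z-h\le -h<0$, so, once $h>n-1$, the bracket $(n-1)+(z-h)$ is negative and the inequality reads
\[
\frac{\sec\theta\,\tanh(\rho/\sec\theta)}{\rho}\big[(h-z)-(n-1)\big]\ \ge\ 1\,.
\]
The condition $h>n-1$ (indeed $h$ large) is guaranteed precisely by $R>R_\varepsilon=\tfrac{2(n-1)}{\varepsilon}$ together with $\varepsilon<\varepsilon_0(n,\theta)$: this is where $\varepsilon_0$ enters, forcing $h=R\tfrac{\cos(\theta-\varepsilon)}{\sin\theta}\gtrsim n-1$. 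I would then establish the displayed estimate by balancing three facts: (i) $h-z\ge h=R\tfrac{\cos(\theta-\varepsilon)}{\sin\theta}$ is large; (ii) on $\Pi_R\cap\{z\le -h\}$ the variable $\rho$ is pinned to $x$ by the equation defining $\Pi_R$, and $\rho\ge|z-h|$; and (iii) $s\mapsto\tanh s/s$ decreases from $1$, so $\tfrac{\sec\theta\tanh(\rho/\sec\theta)}{\rho}$ is comparable to $\min\{1,\sec\theta/\rho\}$. Near the ``ends'' of the oval ($\rho$ small, $|x|$ close to $\tfrac\pi2\sec\theta$) the factor is $\approx 1$ and the large drop $h-z$ wins; near the ``bottom'' of the cap the drop is comparable to $\rho$ and the inequality reduces to $\sec\theta\ge 1+o(1)$, true since $\theta\in(0,\tfrac\pi2)$. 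The precise cut height and the translation by $h$ are engineered exactly so that, by the time the cap approaches its vertical rim --- where $|D\bar u|\to\infty$ and the right‑hand side $(1+|D\bar u|^2)^{-1/2}$ of \eqref{eq:graph_translator} degenerates --- the mean curvature is already sufficiently negative.

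The real work is this last step: a uniform pointwise verification over a region unbounded in $y$, on which the rotational term $\tfrac{\sec\theta\tanh(\rho/\sec\theta)}{\rho}$ is weakest exactly where $\rho$ is largest. Controlling the balance between that term and the vertical‑translation term requires using both the constraint that points lie on $\Pi_R$ and the specific cut height $R\tfrac{\cos(\theta-\varepsilon)}{\sin\theta}$, and the locus near the vertical rim is the most delicate part and must be handled with some care.
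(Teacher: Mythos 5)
Your reduction to a pointwise inequality of the form $\frac{|z'|-(n-1)}{\rho}\tanh(\rho/\sec\theta)\ge\cos\theta$ (with $z'$ the pre-translation height, so $|z'|\ge h=R\cos(\theta-\varepsilon)/\sin\theta$) is indeed what the proof comes down to, and your route to it --- splitting the divergence into a planar piece in $(x,\rho)$ and a rotational piece, and observing that the planar piece collapses via the curve-shortening identity $\div_{(x,\rho)}(\nabla\tilde v/|\nabla\tilde v|)=-|\nabla\tilde v|^{-1}$ --- is a clean structural shortcut compared to the paper's direct computation of $H$ and $\langle\nu,e_{n+1}\rangle$ from $Dv,D^2v$. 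So the setup is fine (modulo the coordinate/orientation mix-ups around ``$(x,y,z-h)\in\Pi_R$ with $z\le -h$'').

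The verification of that inequality, however, is where the content of the lemma actually lies, and your account of it has genuine gaps. First, $\overline\Sigma_{R,\varepsilon}$ is compact, not ``unbounded in $y$'': on $\Pi_R$ one has $\rho\le R/\sin\theta$, and the cut $z'\le-h$ forces $\rho\ge h$, so $\rho\in[R\cos(\theta-\varepsilon)/\sin\theta,\,R/\sin\theta]$. Consequently the ``ends of the oval'' ($\rho$ small, $|x|$ near $\tfrac\pi2\sec\theta$) do not lie on the cut piece at all, and there is no ``vertical rim'' where $|D\bar u|\to\infty$: at the boundary circle $\{z'=-h\}$ the normal has $e_{n+1}$-component $-h/\rho=-\cos(\theta-\varepsilon)\ne 0$, so the graph description never degenerates. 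Second, the claim that $R>R_\varepsilon=2(n-1)/\varepsilon$ together with $\varepsilon<\varepsilon_0$ is ``precisely'' what forces $h>n-1$ is not right: $h>n-1$ only requires $R\gtrsim(n-1)\tan\theta$, a far weaker condition with no $\varepsilon$ in it. The actual constraint, which you never derive, comes from the critical locus where $|z'|=h$ and $|y|$ is maximal, so $\rho=R/\sin\theta$ and $|z'|/\rho=\cos(\theta-\varepsilon)$; there the inequality reduces to roughly $\bigl(\cos(\theta-\varepsilon)-(n-1)\sin\theta/R\bigr)\tanh(R/\tan\theta)\ge\cos\theta$, and it is the need to absorb the $(n-1)/R$-correction into the margin $\cos(\theta-\varepsilon)-\cos\theta\approx\varepsilon\sin\theta$ (while keeping the $\tanh$ and $o(\varepsilon)$ corrections small) that determines $R_\varepsilon$ and $\varepsilon_0$. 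Your three ``balancing facts'' never combine into this estimate, and the two cases you sketch either do not occur on $\overline\Sigma_{R,\varepsilon}$ or avoid the critical locus entirely.
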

\begin{proof}
Set $w=(y,z)$. Then
\[
Dv=\sec\theta \left(\tan\left(\tfrac{x}{\sec\theta}\right),\tanh\left(\tfrac{|w|}{\sec\theta}\right)\tfrac{w}{|w|}\right)
\]
and
\begin{equation*}
D^2v=\left(\begin{matrix}
\sec^2\left(\tfrac{x}{\sec\theta}\right)&\dots \qquad\qquad 0 \qquad\qquad  \dots\\
\vdots & \\
0& \sech^2\left(\tfrac{|w|}{\sec\theta}\right)\frac{w_iw_j}{|w|^2}+\sec\theta\tanh\left(\tfrac{|w|}{\sec\theta}\right)\left(\frac{\delta_{ij}}{|w|}-\frac{w_iw_j}{|w|^3}\right)\\
\vdots  &
\end{matrix}\right)\,.
\end{equation*}
Lengthy computations then yield, on the one hand,
\bann
-\inner{\nu}{e_{n+1}}=\inner{\frac{Dv}{|Dv|}}{e_{n+1}}={}&\frac{\tanh\left(\frac{\sqrt{|y|^2+z^2}}{\sec\theta}\right)\frac{|z|}{\sqrt{|y|^2+z^2}}}{\sqrt{\tan^2\left(\frac{x}{\sec\theta}\right)+\tanh^2\left(\frac{\sqrt{|y|^2+z^2}}{\sec\theta}\right)}}
\eann
and, on the other hand,
\bann
H={}&\div\left(\frac{Dv}{|Dv|}\right)
=\frac{\frac{1}{\sec\theta}+\frac{n-1}{|w|}\tanh\left(\tfrac{|w|}{\sec\theta}\right)}{\sqrt{\tan^2\left(\tfrac{x}{\sec\theta}\right)+\tanh^2\left(\tfrac{|w|}{\sec\theta}\right)}}\,.
\eann
It follows that $\Pi_{R}$ is a supersolution in the region where
\bann
\frac{|z|-(n-1)}{\sqrt{|y|^2+z^2}}\tanh\left(\tfrac{\sqrt{|y|^2+z^2}}{\sec\theta}\right)\geq \cos\theta\,.
\eann
Note that $|y|\leq \frac{\sin(\theta-\varepsilon)}{\sin\theta}R$ wherever $|z|\geq \frac{\cos(\theta-\varepsilon)}{\sin\theta}R$. Thus, whenever $R>R_\varepsilon:=\frac{2(n-1)}{\varepsilon}$ and $z\leq-\frac{\cos(\theta-\varepsilon)}{\sin\theta}R$,
\bann
\frac{|z|-(n-1)}{\sqrt{|y|^2+z^2}}{}&\tanh\left(\tfrac{\sqrt{|y|^2+z^2}}{\sec\theta}\right)\\
\geq{}&
\left(\cos(\theta-\varepsilon)-\tfrac{(n-1)}{R}\sin\theta\right)\tanh\left(\tfrac{\cos(\theta-\varepsilon)}{\tan\theta}R\right)\\
\geq
{}&\cos\theta\left(1+\tfrac{\varepsilon}{2}\tan\theta+o(\varepsilon)\right)\sqrt{1-4\mathrm{e}^{-\frac{2(n-1)\cos^2\theta\sin\theta}{\varepsilon}}}\,.
\eann
This is no less than $\cos\theta$ when $\varepsilon<\varepsilon_0(n,\theta)$.
\end{proof}

\begin{center}
\begin{figure}[h]\label{fig:supersolution}
\includegraphics[width=0.95\textwidth]{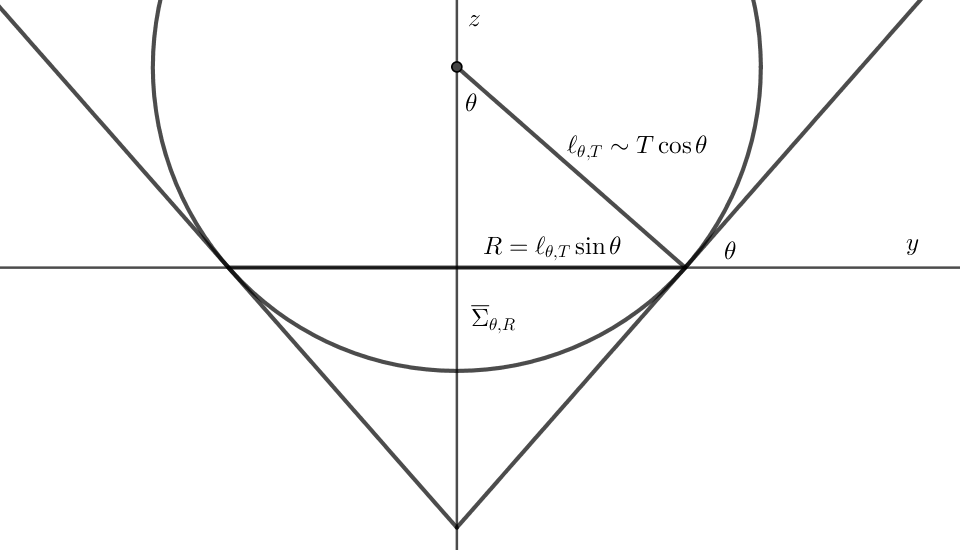}
\caption{Given any $\varepsilon\in(0,\varepsilon_0(n,\theta))$, the portion of $\Pi_{R}$ (the rotated time $T=\sec^2\theta\cosh\left(\frac{R}{\tan\theta}\right)$ slice of the Angenent oval of width $\pi\sec\theta$) lying below height $z=-R\frac{\cos(\theta-\varepsilon)}{\sin\theta}$ is a supersolution of the translator equation when $R>R_\varepsilon:=\frac{2(n-1)}{\varepsilon}$. The surface $\overline \Sigma_{R,\varepsilon}$ is obtained by translating this piece upward so that its boundary lies in $\R^n\times\{0\}$.}
\end{figure}
\end{center}

Consider the `inner' domain
\bann
\overline\Omega_{R,\varepsilon}
:={}&\left\{(x,y)\in S_\theta^{n}:\cos\left(\tfrac{x}{\sec\theta}\right)<\frac{\cosh\left(\tfrac{\sqrt{|y|^2\sin^2\theta+R^2\cos^2(\theta-\varepsilon)}}{\tan\theta}\right)}{\cosh\left(\tfrac{R}{\tan\theta}\right)}\right\}\,.
\eann
Note that $\pd\overline\Omega_{R,\varepsilon}=\pd\overline\Sigma_{R,\varepsilon}$.

The following lemma implies that the inner barrier which touches the outer barrier at $Re_2$ lies above it, so long as $R$ is sufficiently large. 
\begin{lemma}
Given any $R>0$, $\overline\Omega_{\rho_\varepsilon,\varepsilon}\subset\underline \Omega_{R}$, where $\rho_\varepsilon:=\frac{\sin\theta}{\sin(\theta-\varepsilon)}R$.
\end{lemma}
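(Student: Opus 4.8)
The plan is to show the containment $\overline\Omega_{\rho_\varepsilon,\varepsilon}\subset\underline\Omega_R$ directly from the explicit defining inequalities, exploiting that both domains are rotationally symmetric in $y$ and that a point lies in a given domain according to how small $\cos(x/\sec\theta)$ is. Since both sets are of the form $\{\cos(x/\sec\theta)<F(|y|)\}$ (intersected with $S_\theta^n$), it suffices to prove the pointwise inequality $F_{\mathrm{inner}}(|y|)\le F_{\mathrm{outer}}(|y|)$ for all $|y|\ge 0$, where, reading off the definitions,
\[
F_{\mathrm{inner}}(r)=\frac{\cosh\!\left(\tfrac{\sqrt{r^2\sin^2\theta+\rho_\varepsilon^2\cos^2(\theta-\varepsilon)}}{\tan\theta}\right)}{\cosh\!\left(\tfrac{\rho_\varepsilon}{\tan\theta}\right)}
\quad\text{and}\quad
F_{\mathrm{outer}}(r)=\left[\frac{\cosh\!\left(\tfrac{r}{\tan\theta}\right)}{\cosh\!\left(\tfrac{R}{\tan\theta}\right)}\right]^{\sin^2\theta}.
\]
Here $\rho_\varepsilon=\tfrac{\sin\theta}{\sin(\theta-\varepsilon)}R$, so $\rho_\varepsilon\cos(\theta-\varepsilon)=R\cos(\theta-\varepsilon)\tfrac{\sin\theta}{\sin(\theta-\varepsilon)}\ge R\cos\theta$ once $\varepsilon$ is small (indeed $\cos(\theta-\varepsilon)/\sin(\theta-\varepsilon)\ge\cos\theta/\sin\theta$), a fact I will use to handle the $r=0$ endpoint.

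First I would rewrite the inequality $F_{\mathrm{inner}}\le F_{\mathrm{outer}}$ after taking logarithms, reducing to
\[
\log\cosh\!\left(\tfrac{\sqrt{r^2\sin^2\theta+\rho_\varepsilon^2\cos^2(\theta-\varepsilon)}}{\tan\theta}\right)-\log\cosh\!\left(\tfrac{\rho_\varepsilon}{\tan\theta}\right)
\le \sin^2\theta\left[\log\cosh\!\left(\tfrac{r}{\tan\theta}\right)-\log\cosh\!\left(\tfrac{R}{\tan\theta}\right)\right].
\]
The key analytic input is the concavity/sub-additivity behaviour of $s\mapsto\log\cosh\sqrt{s}$ together with the fact that $(\log\cosh)'(t)=\tanh t\le 1$, which gives the Lipschitz bound $\log\cosh a-\log\cosh b\le a-b$ for $a\ge b\ge 0$. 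Applying this to the left-hand side with $a=\tfrac1{\tan\theta}\sqrt{r^2\sin^2\theta+\rho_\varepsilon^2\cos^2(\theta-\varepsilon)}$ and $b=\tfrac{\rho_\varepsilon}{\tan\theta}$, and using $\sqrt{u^2+v^2}-v\le u^2/(2v)$ or more crudely $\sqrt{u^2+v^2}\le u+v$, bounds the left side by a multiple of $r$. For the right-hand side one uses instead the lower bound $(\log\cosh)'(t)=\tanh t\ge\tanh(R/\tan\theta)$ on $[R/\tan\theta,\infty)$ — valid since $r\ge R$ on the relevant range, which holds because $\rho_\varepsilon>R$ forces the inner domain to be ``narrower'' — to get a lower bound linear in $(r-R)$ with slope close to $1$ for $R$ large. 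Comparing the two linear-in-$r$ estimates, and checking the $r=0$ base case separately via $\rho_\varepsilon\cos(\theta-\varepsilon)\ge R\cos\theta\ge R\cos\theta$ (which makes the left side at $r=0$ nonpositive while the right side is also nonpositive but larger in magnitude), closes the argument.

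The main obstacle I anticipate is the $r=0$ endpoint and the transition region for small $r$: there the right-hand side is negative (since $r<R$) while the left-hand side, though also negative by the choice of $\rho_\varepsilon$, must be shown to be \emph{more} negative, and the naive Lipschitz estimates above go the wrong way near $r=0$. I would handle this by treating $r\in[0,R]$ and $r\ge R$ as separate cases: on $[0,R]$ use monotonicity of both sides in $r$ and reduce to the single inequality at $r=0$, namely $\cosh\!\big(\tfrac{\rho_\varepsilon\cos(\theta-\varepsilon)}{\tan\theta}\big)\le\cosh\!\big(\tfrac{\rho_\varepsilon}{\tan\theta}\big)\big/\cosh\!\big(\tfrac{R}{\tan\theta}\big)^{\sin^2\theta}$, which after logarithms and the substitution $\rho_\varepsilon\sin(\theta-\varepsilon)=R\sin\theta$ becomes an elementary inequality in the single variable $R/\tan\theta$ that can be verified by comparing derivatives; on $[R,\infty)$ use the Lipschitz comparison described above. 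A cleaner alternative, which I would try first, is to differentiate both sides in $r$ and show $F_{\mathrm{inner}}'(r)/F_{\mathrm{inner}}(r)\le F_{\mathrm{outer}}'(r)/F_{\mathrm{outer}}(r)$ pointwise, i.e.
\[
\frac{r\sin^2\theta}{\tan\theta\sqrt{r^2\sin^2\theta+\rho_\varepsilon^2\cos^2(\theta-\varepsilon)}}\tanh\!\left(\tfrac{\sqrt{r^2\sin^2\theta+\rho_\varepsilon^2\cos^2(\theta-\varepsilon)}}{\tan\theta}\right)
\le \frac{\sin^2\theta}{\tan\theta}\tanh\!\left(\tfrac{r}{\tan\theta}\right),
\]
which follows because $\tfrac{r\sin\theta}{\sqrt{r^2\sin^2\theta+\rho_\varepsilon^2\cos^2(\theta-\varepsilon)}}\le 1$ and $\tanh$ is increasing with $\sqrt{r^2\sin^2\theta+\rho_\varepsilon^2\cos^2(\theta-\varepsilon)}\ge r\sin\theta$ — wait, that last step needs $\sqrt{r^2\sin^2\theta+\cdots}\ge r$, which fails for large $r$, so the derivative comparison must again be supplemented near $r=0$ by the base-case inequality. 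Either way, the argument is entirely one-variable calculus once the geometry has been reduced to the comparison of $F_{\mathrm{inner}}$ and $F_{\mathrm{outer}}$, and the role of the hypothesis ``$R$ sufficiently large'' (equivalently $\varepsilon$ small, since $R>R_\varepsilon$) is precisely to make the slope $\tanh(R/\tan\theta)$ close enough to $1$ for the endpoint comparison to go through.
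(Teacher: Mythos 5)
Your reduction to the pointwise comparison of $F_{\mathrm{inner}}$ and $F_{\mathrm{outer}}$ is exactly how the paper proceeds, and your ``cleaner alternative'' (comparing logarithmic derivatives) is, after the substitution $z=r^2$, precisely the paper's argument via log-concavity of $g(w):=\cosh\bigl(\sqrt{w}/\tan\theta\bigr)$. However, there is a genuine gap: you never identify the one analytic fact that makes the derivative comparison close, namely that $u\mapsto \tanh u/u$ is decreasing (equivalently, $g$ is log-concave). Without it your estimates are internally contradictory --- you simultaneously want to use ``$\tanh$ increasing'' to replace the argument by the larger one, and $\frac{r\sin\theta}{\sqrt{r^2\sin^2\theta+\rho_\varepsilon^2\cos^2(\theta-\varepsilon)}}\le 1$ to replace the prefactor, but these two moves push the inequality in opposite directions, which is exactly where you break off with ``wait, that last step needs $\dots$, which fails for large $r$.'' The correct statement is that $(\log F_{\mathrm{inner}})'(r)$ versus $(\log F_{\mathrm{outer}})'(r)$ reduces (after cancelling $\sin^2\theta/\tan\theta$) to comparing $\frac{\tanh(\sqrt{r^2\sin^2\theta+b^2}/\tan\theta)}{\sqrt{r^2\sin^2\theta+b^2}/\tan\theta}$ with $\frac{\tanh(r/\tan\theta)}{r/\tan\theta}$ where $b=\rho_\varepsilon\cos(\theta-\varepsilon)$, and the monotonicity of $\tanh u/u$ does the whole job once one notices that $\sqrt{r^2\sin^2\theta+b^2}\ge r$ for all $r\le b/\cos\theta=\rho_\varepsilon\cos(\theta-\varepsilon)/\cos\theta>R$, which is the whole relevant range.

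You also miss a second structural point: $F_{\mathrm{inner}}(R)=F_{\mathrm{outer}}(R)=1$ (a direct consequence of $\rho_\varepsilon\sin(\theta-\varepsilon)=R\sin\theta$), so the comparison is an ODE comparison \emph{backward} from $r=R$ on the interval $[0,R]$, not a Lipschitz estimate propagated forward from a delicate base case at $r=0$. This is why attempting the base case is both unnecessary and, as you have set it up, misleading. Indeed, note that the displayed inequalities ``$\cos(\cdot)<\cdots$'' in the paper's formulas for $\underline\Omega_R$ and $\overline\Omega_{R,\varepsilon}$ should read ``$>$'' (they describe the sets $\{\underline u_R<0\}$ and $\{v<T\}$, and one checks directly that $\underline u_R<0$ is equivalent to $\cos(x/\sec\theta)>[\cosh(|y|/\tan\theta)/\cosh(R/\tan\theta)]^{\sin^2\theta}$); correspondingly the function $f$ in the paper's proof should be shown \emph{non-negative}, not non-positive. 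Your attempted inequality $F_{\mathrm{inner}}\le F_{\mathrm{outer}}$ is therefore the reverse of the one actually needed, and it is numerically false at $r=0$ --- your claimed ``base case'' $\cosh\bigl(\rho_\varepsilon\cos(\theta-\varepsilon)/\tan\theta\bigr)\le\cosh\bigl(\rho_\varepsilon/\tan\theta\bigr)/\cosh\bigl(R/\tan\theta\bigr)^{\sin^2\theta}$ fails for large $R$. Fixing the sign, dropping the base case, and invoking log-concavity (or equivalently the monotonicity of $\tanh u/u$) in the derivative comparison on $[0,R]$ gives a complete proof, which is essentially the paper's.
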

\begin{proof}
It suffices to show that the function $f:\R_+\to\R$ defined by
\[
f(\zeta):=\frac{\cosh\left(\tfrac{\sqrt{\zeta^2\sin^2\theta+\rho_\varepsilon^2\cos^2(\theta-\varepsilon)}}{\tan\theta}\right)}{\cosh\left(\tfrac{\rho_\varepsilon}{\tan\theta}\right)}-\left[\frac{\cosh\left(\tfrac{\zeta}{\tan\theta}\right)}{\cosh\left(\tfrac{R}{\tan\theta}\right)}\right]^{\sin^2\theta}
\]
is non-positive. This follows from log-concavity of the function
\[
g(w):=\cosh\left(\tfrac{\sqrt{w}}{\tan\theta}\right)\,.
\]
Indeed, given any $s\in(0,1)$ and $w>0$, log-concavity of $g$ implies that the function
\[
G(z):=\frac{g(sz+(1-s)w)}{g(z)^s}
\]
is monotone non-decreasing for $z<w$. Since $\zeta<R<\frac{\tan\theta}{\tan(\theta-\varepsilon)}R=\frac{\cos(\theta-\varepsilon)}{\cos\theta}\rho_\varepsilon$, this implies
\bann
\frac{g(\zeta^2\sin^2\theta+\rho_\varepsilon^2\cos^2(\theta-\varepsilon))}{g(\zeta^2)^{\sin^2\theta}}\leq{}&\frac{g(R^2\sin^2\theta+\rho_\varepsilon^2\cos^2(\theta-\varepsilon))}{g(R^2)^{\sin^2\theta}}\\
={}&\frac{g(\rho_\varepsilon^2)}{g(R^2)^{\sin^2\theta}}\,.
\eann
The claim follows.
\end{proof}

\begin{corollary}\label{cor:upperbarrier}
Set $\varepsilon_R:=\frac{2(n-1)}{R}$, $\overline \Sigma_{R}:=\overline \Sigma_{\rho_{\varepsilon_R},\varepsilon_R}$ and $\overline \Omega_{R}:=\overline\Omega_{\rho_{\varepsilon_R},\varepsilon_R}$. Then, for $R>R_0:=\frac{2(n-1)}{\varepsilon_0}$, $\overline\Sigma_{R}$ is a supersolution of the translator equation with boundary $\pd \overline \Sigma_{R}=\pd \overline \Omega_{R}$.
\end{corollary}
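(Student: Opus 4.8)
The statement to prove is Corollary~\ref{cor:upperbarrier}, which asserts that for the specific choice $\varepsilon_R := \frac{2(n-1)}{R}$ (so that $R = R_{\varepsilon_R}$ exactly at the threshold), the surface $\overline\Sigma_R := \overline\Sigma_{\rho_{\varepsilon_R},\varepsilon_R}$ is a supersolution of the translator equation with boundary $\partial\overline\Sigma_R = \partial\overline\Omega_R$, provided $R > R_0 := \frac{2(n-1)}{\varepsilon_0}$.

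\medskip

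The plan is to simply assemble the three preceding lemmas with the correct bookkeeping on the parameters. First I would observe that the condition $R > R_0 = \frac{2(n-1)}{\varepsilon_0}$ is precisely equivalent to $\varepsilon_R = \frac{2(n-1)}{R} < \varepsilon_0$, so the hypothesis $\varepsilon < \varepsilon_0$ of the supersolution lemma is satisfied with $\varepsilon = \varepsilon_R$. Next, the second hypothesis of that lemma requires $R > R_\varepsilon := \frac{2(n-1)}{\varepsilon}$; but with $\varepsilon = \varepsilon_R$ we have $R_{\varepsilon_R} = \frac{2(n-1)}{\varepsilon_R} = R$, so the inequality $R > R_{\varepsilon_R}$ is... an equality, not a strict inequality. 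This is the one genuine subtlety: the corollary as stated applies the supersolution lemma at the boundary case $R = R_\varepsilon$. I would handle this either by noting that the supersolution estimate in the proof of that lemma (the chain ending in ``no less than $\cos\theta$ when $\varepsilon < \varepsilon_0$'') in fact holds with non-strict inequality $R \geq R_\varepsilon$ — inspecting that computation, the factor $\cos(\theta-\varepsilon) - \frac{n-1}{R}\sin\theta$ and the $\tanh$ term are evaluated using only $R \geq R_\varepsilon$, so the argument goes through verbatim — or, more cleanly, by replacing $\varepsilon_R$ with any $\varepsilon \in (\varepsilon_R, \varepsilon_0)$ and then taking a limit, or simply by shrinking: since $\varepsilon \mapsto \overline\Sigma_{\rho_\varepsilon, \varepsilon}$ is defined for all $\varepsilon$ in a range, picking $\varepsilon$ slightly larger than $\varepsilon_R$ and slightly less than $\varepsilon_0$ still gives $R > R_\varepsilon$ strictly. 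The safest exposition is to just remark that the proof of the supersolution lemma applies with $R \geq R_\varepsilon$.

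\medskip

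With the supersolution property of $\overline\Sigma_{\rho_{\varepsilon_R},\varepsilon_R}$ in hand, the statement about the boundary, $\partial\overline\Sigma_R = \partial\overline\Omega_R$, is exactly the remark recorded just before the previous lemma: ``Note that $\partial\overline\Omega_{R,\varepsilon} = \partial\overline\Sigma_{R,\varepsilon}$,'' specialized to $R \rightsquigarrow \rho_{\varepsilon_R}$ and $\varepsilon \rightsquigarrow \varepsilon_R$. So there is nothing further to prove there. (I would also note in passing, though it is not strictly part of the statement, that the previous lemma with $R \rightsquigarrow \rho_{\varepsilon_R}$ gives $\overline\Omega_R = \overline\Omega_{\rho_{\varepsilon_R},\varepsilon_R} \subset \underline\Omega_R$, which is why these are the ``matching'' inner and outer barriers; but the corollary itself only claims the supersolution property and the boundary identity.)

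\medskip

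The main obstacle — really the only thing requiring care — is the off-by-a-limiting-case issue in applying the supersolution lemma at $R = R_{\varepsilon_R}$ rather than $R > R_{\varepsilon_R}$; everything else is a direct substitution. I expect the proof to be two or three sentences: substitute $\varepsilon = \varepsilon_R$, check $\varepsilon_R < \varepsilon_0 \iff R > R_0$, note that the supersolution estimate in the previous lemma holds for $R \geq R_\varepsilon$ (so in particular at $R = R_{\varepsilon_R}$), and invoke the already-recorded identity $\partial\overline\Omega_{R,\varepsilon} = \partial\overline\Sigma_{R,\varepsilon}$.
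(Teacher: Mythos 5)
Your overall plan --- substitute $\varepsilon = \varepsilon_R$ and the first subscript $\rho_{\varepsilon_R}$ into the preceding lemmas and invoke the boundary identity --- is the right one, and the equivalence $\varepsilon_R < \varepsilon_0 \Longleftrightarrow R > R_0$ is correct. However, the ``genuine subtlety'' you isolate is a phantom, caused by misidentifying which quantity plays the role of ``$R$'' in the supersolution lemma. In the expression $\overline\Sigma_{\rho_{\varepsilon_R},\varepsilon_R}$ the lemma's ``$R$'' is the \emph{first subscript}, namely $\rho_{\varepsilon_R}$, not the ambient $R$. The lemma's hypothesis ``$R > R_\varepsilon$'' therefore reads $\rho_{\varepsilon_R} > R_{\varepsilon_R}$. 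You correctly compute $R_{\varepsilon_R} = \frac{2(n-1)}{\varepsilon_R} = R$, but the quantity that must exceed it is $\rho_{\varepsilon_R} = \frac{\sin\theta}{\sin(\theta-\varepsilon_R)}\,R$, which is \emph{strictly} greater than $R$ because $\sin(\theta-\varepsilon_R) < \sin\theta$ for $\varepsilon_R \in (0,\theta)$. So the hypothesis is satisfied with strict inequality, with no need to weaken the lemma to $R \ge R_\varepsilon$, take a limit, or shrink $\varepsilon$. The proof is simply: $R>R_0$ gives $\varepsilon_R<\varepsilon_0$; $\rho_{\varepsilon_R}>R=R_{\varepsilon_R}$ holds automatically; apply the supersolution lemma with $(R,\varepsilon)\rightsquigarrow(\rho_{\varepsilon_R},\varepsilon_R)$; and the already-recorded identity $\partial\overline\Omega_{R,\varepsilon}=\partial\overline\Sigma_{R,\varepsilon}$ gives the boundary claim. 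Your suggested workarounds would also succeed, but they are addressing a nonexistent boundary case.
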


\section{Existence} 

In this section we prove the existence theorem, which we now recall.
\begin{theorem*}[Existence of convex translators in slab regions]
For every $n\geq 2$ and every $\theta\in(0,\frac{\pi}{2})$ there exists a strictly convex translator $\Sigma^n_\theta$ which lies in $S_\theta^{n+1}$ and in no smaller slab.
\end{theorem*}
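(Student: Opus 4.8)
The plan is to construct $\Sigma^n_\theta$ as a limit of solutions to the Dirichlet problem \eqref{eq:DP} on a sequence of domains exhausting the slab, using the barriers from Section 4 to control the geometry. For each large $R>R_0$, consider the domain $\underline\Omega_R$ (the ``outer'' domain) and solve \eqref{eq:DP} on it with boundary data $\psi_R$ chosen so that $\graph\psi_R=\partial\underline\Sigma_R$. Classical existence theory for the (graphical) translator equation — which is quasilinear, uniformly elliptic on bounded gradient sets, with the mean convexity of $\partial\underline\Omega_R$ providing the barrier condition for solvability — yields a smooth solution $u_R$. The key point is that $u_R$ is trapped between the sub- and supersolutions: by the comparison principle, $\underline u_R\leq u_R$, and since the inner barrier $\overline\Sigma_R$ touches the outer barrier along the boundary and lies above it (the lemma preceding Corollary \ref{cor:upperbarrier}), we also get $u_R\leq \overline u_R$ on the common part of the domains. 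Both barriers converge, after recentering the point $R\phi$ to the origin, to the half oblique Grim hyperplane $\Gamma^n_{\theta,\phi}\cap\{z\leq 0\}$, and they lie in $S^{n+1}_\theta$ and in no smaller slab; this pins down the slab width of the limit.

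Next I would extract the limit. Since $\partial\underline\Omega_R$ and $\psi_R$ are rotationally symmetric with respect to $\mathbb{E}^{n-1}$ and have $C^{1,\alpha}$ (indeed $C^\infty$) norms bounded uniformly in $R$ on compact subsets — this requires a short check that the boundary data, rescaled appropriately, have uniformly bounded geometry, which follows from the explicit formulas — Lemma \ref{lem:regularity} and Corollary \ref{cor:regularity} (with Remark \ref{rem:regularity}) give uniform $C^{k}$ estimates for $\graph u_R$ on compact subsets of the slab, away from and up to the boundary. Hence, after passing to a subsequence $R_i\to\infty$ and using a diagonal argument, $\graph u_{R_i}$ converges locally smoothly to a translator $\Sigma^n_\theta$ properly embedded in the open slab $S^{n+1}_\theta\times\R$. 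The barrier trapping passes to the limit: $\Sigma^n_\theta$ lies in $S^{n+1}_\theta$, and because it contains points arbitrarily close to the Grim hyperplane $\Gamma^n_{\theta,\phi}$ (which fills out the slab $S^{n+1}_\theta$ in the $x$-direction), it lies in no smaller slab. One should also verify $\Sigma^n_\theta$ is nonempty and noncompact, e.g. by a height bound at the origin coming from the barriers.

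It remains to prove strict convexity. First, each $\Sigma^n_\theta$ is mean convex: $H=-\langle\nu,e_{n+1}\rangle>0$ since it is a graph with downward normal, $\langle\nu,e_{n+1}\rangle<0$, so the strict inequality is automatic for an entire graphical translator (it cannot be a flat plane since flat planes do not satisfy \eqref{eq:graph_translator} with $H\equiv$ const $\neq 0$... more precisely $H\equiv(1+|Du|^2)^{-1/2}>0$). Next, $\Sigma^n_\theta$ is rotationally symmetric with respect to $\mathbb{E}^{n-1}$ (inherited from the symmetry of the $u_R$), hence has at most two distinct principal curvatures at each point, and it has bounded second fundamental form by the uniform estimates. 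Therefore Theorem \ref{thm:convexity} applies and $\Sigma^n_\theta$ is convex. Strict convexity then follows from the strong maximum principle applied to the smallest principal curvature: by the evolution equation \eqref{eq:SMP}-type identity (more simply, $-(\cd_V+\Delta)\kappa_1=|A|^2\kappa_1 + (\text{nonneg.})\geq |A|^2\kappa_1$ where it vanishes), if $\kappa_1$ vanished anywhere it would vanish identically, forcing $\Sigma^n_\theta$ to split off a line and reduce to a lower-dimensional translator times $\R^{n-1}$, i.e. to a Grim hyperplane — but the Grim hyperplane has slab width $\pi\sec\theta$ only if it is the oblique one, which is not an entire graph over the slab in the way our limit is (it degenerates at the slab boundary), contradicting that $\Sigma^n_\theta$ is a complete entire graph over $S^{n+1}_\theta$; alternatively, a product $\Sigma^{n-1}\times\R$ would not be strictly mean-convex-graphical in the required sense, yielding the contradiction.

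\textbf{Main obstacle.} I expect the delicate step to be the barrier trapping near the boundary of the slab as $R\to\infty$: ensuring that the solutions $u_{R_i}$ do not escape the slab in the limit, and conversely that they genuinely reach the full width $\pi\sec\theta$ (no collapse to a thinner slab). This hinges on the quantitative closeness of the inner and outer barriers along $\partial\underline\Omega_R$ — precisely the content of the (commented-out) Lemma on ``boundary planes'' — together with the uniform regularity from Lemma \ref{lem:regularity}, which is why the rotational-symmetry hypothesis there is essential in high dimensions. A secondary technical point is verifying the comparison principle applies on the noncompact-in-the-limit domains; this is handled at finite $R$ where all domains are bounded, and only the limit is taken afterward.
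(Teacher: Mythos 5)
Your construction and the use of the barriers for existence, the height estimate, and the compactness theory (Lemma \ref{lem:regularity}, Corollary \ref{cor:regularity}) all track the paper's argument closely, as does your appeal to Theorem \ref{thm:convexity} for convexity and the splitting-plus-uniqueness-of-Grim-Reaper argument for \emph{strict} convexity. The genuine gap is in your treatment of the assertion that the limit $\Sigma$ lies in no \emph{smaller} slab, which you attempt to deduce from the barrier trapping (``it contains points arbitrarily close to $\Gamma^n_{\theta,\phi}$'') and the closeness of the two barriers along $\partial\underline\Omega_R$.

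That step does not go through as written, and indeed the paper treats it as the main remaining point and supplies a different argument. The trapping $\underline u_R \le u_R \le 0$ holds in the original coordinates, but the limit $\Sigma$ is extracted after re-centering $\graph u_{R_i}$ at its lowest point $(0,u_{R_i}(0))$, whose height drops to $-\infty$; the ``edge'' region where the barriers hug the walls of the slab then recedes to infinity at speed $\sim R_i$, and the re-centered subsolution $\underline u_{R_i}-u_{R_i}(0)$ goes to $-\infty$ pointwise rather than to a graph filling out $S_\theta$. So the barrier argument gives $\Sigma\subset S_\theta$ but does not by itself rule out that $\Sigma$ lies in a strictly thinner slab (e.g.\ it does not a priori exclude a product $\Gamma^1\times\R^{n-1}$ in the slab of width $\pi$). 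The commented-out lemma you invoke concerns closeness of the two barriers near $\partial\underline\Omega_R$ and is not used in the final proof precisely because it does not address this re-centered limit. What the paper actually does is: (i) note by Myers' theorem that a complete convex noncompact translator must have $\inf_\Sigma H = 0$; (ii) show, via the linear function $v:=1-\tfrac{x}{(\pi/2)\sec\theta}$, that $-(\Delta+\nabla_V)v=0$, hence $H/v$ satisfies a clean elliptic equation, and the maximum principle on each approximating graph $\Sigma_i$ gives $\min_{\Sigma_i\cap\{x>0\}}H/v\ge\min\{\cos\theta,\ \min_{\Sigma_i\cap\{x=0\}}H\}$; (iii) handle the possibility that the $H$-minimum on $\{x=0\}$ degenerates by a blow-up producing a minimal limit (a hyperplane or half-hyperplane in $S_\theta$), which is excluded by rotational/reflection symmetry. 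Combining (i)--(iii) forces $\sup_\Sigma x=\tfrac{\pi}{2}\sec\theta$. You would need to supply this maximum-principle argument (or something of equal force) to close the gap.
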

\begin{proof}
Given $R>0$, let $u_{R}$ be the solution of
\begin{equation*}
\begin{cases}\displaystyle
H[u_{R}]=\frac{1}{\sqrt{1+|Du_{R}|^2}}\quad\text{in}\; \Omega_{R}\\
\hspace{17pt} u_{R}= 0\quad\text{on}\; \pd\Omega_{R}\,,
\end{cases}
\end{equation*}
where $\Omega_{R}:=\underline\Omega_{R}$. Since the equation admits upper and lower barriers ($0$ and $\underline u_{R}$, respectively), existence and uniqueness of a smooth solution follows from well-known methods (see, for example, \cite[Chapter 15]{GT}). 
Uniqueness implies that $u_{R}$ is rotationally symmetric with respect to the subspace $\mathbb{E}^{n-1}=\Span\{e_2,\dots,e_{n}\}$. 
Since $\underline u_{R}$ is a subsolution, its graph lies below $\graph u_R$. Since the two surfaces coincide on the boundary $\pd\Omega_{R}$,
\ba\label{eq:Hboundbarrier}
H[u_{R}]=-\inner{\nu_{R}}{e_{n+1}}\geq{}&-\inner{\underline\nu_{R}}{e_{n+1}}\nonumber\\
\geq{}&\cos\theta \cos\left(x\cos\theta\right)\nonumber\\
\geq{}& \cos\theta\left(1-\frac{x}{\frac{\pi}{2}\sec\theta}\right)
\ea
on $\pd\Omega_{R}$, where $\underline\nu_R$ is the downward pointing unit normal to $\graph \underline u_R$. By Corollary \ref{cor:upperbarrier}, we also find, for $R>R_0$, that
\ba\label{eq:heightestimate}
-u_{R}(0)\ge\frac{1-\cos\theta}{\sin\theta}R\to \infty \quad\text{as}\quad R\to\infty\,.
\ea

Let $R_i\to\infty$ be a diverging sequence and consider the translators-with-boundary
\[
\Sigma_i:=\graph u_{R_i}-u_{R_i}(0)e_{n+1}\,.
\]
By Corollary \ref{cor:regularity} and the height estimate \eqref{eq:heightestimate} some subsequence converges locally uniformly in the smooth topology to some limiting translator, $\Sigma$, with bounded second fundamental form. By Theorem \ref{thm:convexity}, $\Sigma$ is convex.

Certainly $\Sigma$ lies in the slab $S_\theta$, so it remains only to prove that it lies in no smaller slab (strict convexity will then follow from the splitting theorem and uniqueness of the Grim Reaper). Set
\[
v:=1-\frac{x}{\frac{\pi}{2}\sec\theta}\,,
\]
where $x(X):=\inner{X}{e_1}$. We claim that
\begin{equation}\label{eq:Hlowerbound}
\inf_{\Sigma\cap\{x>0\}}\frac{H}{v}>0\,.
\end{equation}
Since $\Sigma$ is convex and non-compact (it is complete and translates under mean curvature flow) Myers' theorem implies that $\inf_{\Sigma}H=0$ and hence $\sup_{\Sigma}x= \frac{\pi}{2}\sec\theta$ as desired. To prove \eqref{eq:Hlowerbound}, first observe that
\[
-(\Delta+\cd_V)v=0
\]
and hence
\[
-(\Delta+\cd_V)\frac{H}{v}=|A|^2\frac{H}{v}+2\inner{\cd\frac{H}{v}}{\frac{\cd v}{v}}\,,
\]
where $V$ is the tangential projection of $e_{n+1}$. The maximum principle then yields
\bann
\min_{\Sigma_i\cap\{x>0\}}\frac{H}{v}\geq{}& \min\left\{\min_{\pd\Sigma_i\cap\{x>0\}}\frac{H}{v},\min_{\Sigma_i\cap\{x=0\}}\frac{H}{v}\right\}\\
={}&\min\left\{\cos\theta,\min_{\Sigma_i\cap\{x=0\}}H\right\}\,.
\eann
If $\liminf_{i\to\infty}\min_{\Sigma_i\cap\{x=0\}}H>0$ then we are done. So suppose that $\liminf_{i\to\infty}H(X_i)=0$ along some sequence of points $X_i\in \Sigma_i\cap\{x=0\}$. Then, by Corollary \ref{cor:regularity}, after passing to a subsequence, the translators-with-boundary
\[
\hat \Sigma_i:=\Sigma_i-X_i
\]
converge locally uniformly in $C^\infty$ to a translator (possibly with boundary) $\hat\Sigma$ which lies in $S_\theta$ and satisfies $H\geq 0$ with equality at the origin. Note that the origin must be an interior point since, recalling \eqref{eq:Hboundbarrier}, $H>\cos\theta$ on $\pd\Sigma_i\cap\{x=0\}$ for all $i$. The strong maximum principle then implies that $H\equiv 0$ on $\hat\Sigma$ and we conclude that $\hat\Sigma$ is either a hyperplane or half-hyperplane.
Since, by the reflection symmetry, the limit cannot be parallel to $\{0\}\times\R^{n-1}\times\R$, neither option can be reconciled with the fact that $\hat\Sigma$ lies in $S_\theta$.
\end{proof}

\section{Asymptotics and reflection symmetry}\label{sec:asymptotics}

In this section we prove Theorem \ref{thm:asymptotics}. We begin with proving that, after translation, the translators have the correct asymptotics. 


\begin{theorem}\label{thm:asymptotics2}
Given $n\geq 2$ and $\theta\in(0,\frac{\pi}{2})$ let $\Sigma_\theta^n$ be a convex translator which lies in $S_\theta^{n+1}$ and in no smaller slab. If $n\geq 3$, assume in addition that $\Sigma_\theta^n$ is rotationally symmetric with respect to the subspace $\mathbb{E}^{n-1}:=\Span\{e_2,\dots,e_n\}$. Given any unit vector $\phi\in\mathbb{E}^{n-1}$ the curve $\{\sin\omega \phi-\cos\omega e_{n+1}:\omega\in[0,\theta)\}$ lies in the normal image of $\Sigma_\theta^n$ and the translators
\[
\Sigma^n_{\theta,\omega}:=\Sigma^n_\theta-P(\sin\omega \phi-\cos\omega e_{n+1})
\]
converge locally uniformly in the smooth topology to the oblique Grim hyperplane $\Gamma^n_{\theta,\phi}$ as $\omega\to \theta$, where $P:S^n\to\Sigma^n_{\theta}$ is the inverse of the Gauss map.
\end{theorem}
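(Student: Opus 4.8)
The plan is to exploit the convexity of $\Sigma^n_\theta$ to parametrize it by its Gauss map and to track the behaviour of the surface as the normal direction approaches the ``horizontal'' limit $\sin\theta\,\phi-\cos\theta\,e_{n+1}$. First I would establish that the normal image $\nu(\Sigma^n_\theta)\subset S^n$ is an open subset of the lower hemisphere whose closure meets the equator exactly in the great subsphere $S^{n-2}\subset\mathbb{E}^{n-1}$ dual to $e_1$; here one uses that $\Sigma^n_\theta$ lies in the slab $S^{n+1}_\theta$ and in no smaller slab, so that the supporting hyperplanes with normal $\pm e_1$ are approached but the planes are not attained (the surface is noncompact and complete), together with the translator equation $H=-\langle\nu,e_{n+1}\rangle$ which forces $H\to 0$ precisely as $\nu$ tends to the equator. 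This shows the curve $\omega\mapsto\sin\omega\,\phi-\cos\omega\,e_{n+1}$, $\omega\in[0,\theta)$, lies in $\nu(\Sigma^n_\theta)$ provided I can rule out the normal reaching the equator ``too early'', i.e. at an angle $\omega<\theta$; this last point is exactly where the slab width $\pi\sec\theta$ and the oblique Grim hyperplane barrier enter, via the fact (from Section 4) that $\Gamma^n_{\theta,\phi}$ lies in $S^{n+1}_\theta$ and in no smaller slab.

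Next I would set up the compactness argument. For $\omega\to\theta$, consider the translated surfaces $\Sigma^n_{\theta,\omega}=\Sigma^n_\theta-P(\sin\omega\,\phi-\cos\omega\,e_{n+1})$; by construction each passes through the origin with unit normal $\sin\omega\,\phi-\cos\omega\,e_{n+1}\to\sin\theta\,\phi-\cos\theta\,e_{n+1}$ there. Since $\Sigma^n_\theta$ has bounded second fundamental form and the $\Sigma^n_{\theta,\omega}$ are convex translators lying in the fixed slab $S^{n+1}_\theta$, Corollary \ref{cor:regularity} (applied after writing the pieces as graphs over their tangent planes, using the rotational symmetry when $n\geq 3$) yields uniform local $C^\infty$ estimates, so that along any sequence $\omega_i\to\theta$ a subsequence converges locally smoothly to a limit translator $\Sigma_\infty$ through the origin, convex, lying in $\overline{S^{n+1}_\theta}$, with normal $\sin\theta\,\phi-\cos\theta\,e_{n+1}$ at $0$. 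The content of the theorem is then that $\Sigma_\infty=\Gamma^n_{\theta,\phi}$, which I would prove by a rigidity/classification step: the limit is a convex translator which touches both boundary hyperplanes of the slab $S^{n+1}_\theta$ — it must touch $\{x=\tfrac{\pi}{2}\sec\theta\}$ because the normal $\sin\theta\,\phi-\cos\theta\,e_{n+1}$ has zero $e_1$-component, forcing a line of symmetry/a splitting — and a convex translator contained in the closed slab $\overline{S^{n+1}_\theta}$ which is not an entire graph and touches both faces must, by the strong maximum principle applied to $H$ (which satisfies $-(\Delta+\nabla_V)H=|A|^2H$, $H\geq 0$) together with a splitting argument, be a product $\Gamma^1_{\text{stretched}}\times\R^{n-1}$ up to rotation, i.e. exactly the oblique Grim hyperplane $\Gamma^n_{\theta,\phi}$. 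Uniqueness of the Grim Reaper curve of a given width pins down the constant.

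The main obstacle I anticipate is the rigidity step identifying $\Sigma_\infty$ with $\Gamma^n_{\theta,\phi}$ — equivalently, ruling out that the rescaled/translated limit is some other convex translator sitting in the slab (for instance a lower-dimensional example crossed with a Euclidean factor in the ``wrong'' orientation, or a convex translator asymptotic to a different configuration). The key leverage is that the limiting normal at the origin is horizontal-ish (lies in $\Span\{\phi,e_{n+1}\}$ and makes angle exactly $\theta$ with the vertical), which by the translator equation gives $H(0)=\cos\theta\cdot 0$? — no: $H(0)=-\langle\nu,e_{n+1}\rangle=\cos\theta>0$ there, so $0$ is an interior point with positive mean curvature; the real constraint is that as one moves along $\Sigma_\infty$ in the $\phi$-direction the normal is pinned, which I would convert into a statement that the function $\langle\nu,e_1\rangle$ attains an interior zero with a sign, forcing $\langle\nu,e_1\rangle\equiv 0$ by the strong maximum principle and hence a splitting $\Sigma_\infty=\Gamma\times\R^{n-1}$ for a translator curve $\Gamma$ in the plane $\Span\{e_1,e_{n+1}\}$-ish; convexity and the slab width then force $\Gamma$ to be the Grim Reaper of width $\pi\sec\theta$, rotated by $\theta$. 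Getting the splitting direction and the function to which the maximum principle applies exactly right, and making sure the limit is taken along the full range $\omega\to\theta$ rather than a subsequence (by showing the limit is independent of the subsequence, which follows once uniqueness of the limit $\Gamma^n_{\theta,\phi}$ is known), is the delicate part.
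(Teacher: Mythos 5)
Your proposal correctly sets up the compactness framework (translate by $P(\sin\omega\phi-\cos\omega e_{n+1})$, use the uniform curvature bounds to extract a smooth subsequential limit $\Sigma_\infty$ through the origin), and your instinct that the limit must be an oblique Grim hyperplane is also right; the paper gets there even more directly by observing that the limit contains a ray and hence splits by convexity. Where the proposal has a genuine gap is the step you flag as ``ruling out the normal reaching the equator too early,'' which is in fact the entire content of the theorem. You propose to handle it by invoking that $\Gamma^n_{\theta,\phi}$ lies in $S^{n+1}_\theta$ and in no smaller slab, but this fact is not enough: if $\overline\omega:=\sup\{\omega:\sin\omega\phi-\cos\omega e_{n+1}\in\nu(\Sigma)\}$ satisfies $\overline\omega<\theta$, the subsequential limit is $\Gamma^n_{\overline\omega,\phi}$, which lies strictly \emph{inside} the open slab $S^{n+1}_\theta$ and touches neither face. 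Nothing in your rigidity step rules this out; you tacitly assume the limiting normal at the origin is $\sin\theta\phi-\cos\theta e_{n+1}$, which is precisely what one has to prove.

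The paper closes this gap with a nontrivial barrier argument that does not appear anywhere in your outline. First it proves a quantitative \emph{width estimate} (Lemma~\ref{lem:width}): by convexity the cross-sections of $\Sigma$ at height $h$ must lie outside the linear interpolation between the wide ``middle'' region (width $\pi\sec\theta$) and the narrow ``edge'' region (width $\pi\sec\overline\omega$). It then brings in the rotationally symmetric \emph{ancient pancake} $\Pi^\omega_t$ of~\cite{BLT} for $\omega\in(\overline\omega,\theta)$; using the asymptotics $\ell_\omega(t)=-t\cos\omega+O(\log(-t))$ of its radius and the width estimate, one shows (Claim~\ref{above}) that if $\overline\omega<\theta$ then $\Pi^\omega_t-te_{n+1}$ lies strictly above $\Sigma$ for $-t$ large. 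Since $\Pi^\omega_t$ and $\Sigma+te_{n+1}$ both reach the origin at time zero, this contradicts the avoidance principle. Without this interpolation-and-pancake mechanism (or a substitute of comparable strength), the proof does not go through; your ``touches both faces, hence splits'' argument presupposes the conclusion.
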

\begin{proof}
We will first prove the theorem assuming that $\Sigma_\theta^n$ is rotationally symmetric with respect to the subspace $\mathbb{E}^{n-1}:=\Span\{e_2,\dots,e_n\}$. We will then show how this hypothesis can be removed when $n=2$. Fix a unit vector $\phi\in \Span\{e_2,\dots,e_n\}$ and define
\[
\overline \omega:=\sup\{\omega\in[0,\infty):\sin\omega \phi-\cos\omega e_{n+1}\in \nu(\Sigma)\}\,.
\]
Let $\omega_i$ be a sequence of points converging to $\overline \omega$. Then the translators
\[
\Sigma_{i,\phi}:=\Sigma-P_\phi(\omega_i)
\]
have uniformly bounded curvature and pass through the origin. After passing to a subsequence, they must therefore converge locally uniformly to a limit translator. The limit must be the oblique Grim hyperplane $\Gamma^n_{\overline\omega,\phi}$ since it contains the ray $\{r(\cos\overline\omega \phi+\sin\overline\omega e_{n+1}):r>0\}$ and lies in a slab parallel to $S_\theta$ (and, when $n\geq 3$, splits off an additional $n-2$ lines due to the rotational symmetry). In fact, since the components of the normal are monotone along the curve $\gamma(\omega):=P(\sin\omega \phi-\cos\omega e_{n+1})$, the normal must actually converge (to $\sin\overline\omega \phi-\cos\overline\omega e_{n+1}$) along $\gamma$. It follows that the limit is independent of the subsequence and we conclude that the translators
\[
\Sigma_{\omega,\phi}:=\Sigma-P_\phi(\omega)
\]
converge locally uniformly in $C^\infty$ to $\Gamma^n_{\overline\omega,\phi}$ as $\omega\to\overline\omega$. Note that $\overline\omega\leq \theta$ since the limit $\Gamma^n_{\overline\omega,\phi}$ must lie in $S_\theta$. 
It remains to show that $\overline\omega\geq \theta$.

Suppose, to the contrary, that $\overline \omega<\theta$. Given $\omega\in[0,\frac{\pi}{2})$, let $\Pi^\omega_t=\sec\omega\Pi_{\cos^2\omega t}$ be the rotationally symmetric ancient pancake which lies in the slab $S^{n+1}_\omega$ (and no smaller slab) and becomes extinct at the origin at time zero that has been constructed in~ \cite{BLT}. 
The `radius' $\ell_\omega(t)$ of $\Pi^\omega_t$ satisfies
\begin{align}\label{eq:ellomega}
\ell_\omega(t):=\max_{p\in \Pi^\omega_t}\inner{p}{e_2}={}&\sec\omega\,\ell_{0}(\cos^2\omega t)\nonumber\\
={}&-t\cos\omega+(n-1)\sec\omega\log(-t)+c+o(1)
\end{align}
as $t\to-\infty$, where the constant $c$ and the remainder term depend on $\omega$ and $n$. Observe that the ray $L_\omega=\{r(\cos\omega \phi+\sin\omega e_{n+1}):r>0\}$ is tangent to the circle in the plane $\Span\{\phi,e_{n+1}\}$ of radius $-\cos\omega t$ centred at $-te_{n+1}$. Indeed, a point $r(\cos\omega\phi+\sin\omega e_{n+1})$ lies on this circle if and only if
\begin{align*}
\vert r\cos\omega\phi+(r\sin\omega+t)e_{n+1}\vert^2={}&\cos^2\omega t^2\\\Pi^{\omega}_t
\Longleftrightarrow\quad (r-\sin\omega(-t))^2={}&0\,.
\end{align*}
So there exists a unique point with this property, as claimed. Since, by hypothesis, $\theta<\overline \omega$, we conclude from \eqref{eq:ellomega} that the circle of radius $\ell_{\theta}(-t)$ lies above the line $L_{\overline\omega}$ for $-t$ sufficiently large (cf. Figure \ref{fig:pancake_barrier}). Our goal is to show that, in fact, $\Pi^{\omega}_t-te_{n+1}$ lies above $\Sigma$ for $-t$ sufficiently large (and hence $\Pi^{\omega}_t$ lies above $\Sigma_t:=\Sigma+te_{n+1}$ for $-t$ sufficiently large). But since $\Pi^{\omega}_t$ and $\Sigma_t$ both reach the origin at time zero, this would contradict the avoidance principle. 



In order to prove that $\Pi^{\omega}_t-te_{n+1}$ lies above $\Sigma$ for $-t$ sufficiently large, we need an estimate for the `width' of $\Sigma$. We know that, near its `edge region', $\Sigma$ looks like a Grim hyperplane of width $\sec\overline\omega$, whereas, in its `middle region', it looks like two parallel planes of width $\sec\theta$. By convexity, it must lie outside the linearly interpolating region in between (see Figure \ref{fig:linearlyinterpolating}). Lemma~\ref{lem:width} below quantifies this elementary observation.

Given $p\in \Sigma$ 
set
\[
x(p):=\inner{p}{e_1},\quad y(p):=\inner{p}{\phi}\quad \text{and}\quad z(p):=\inner{p}{e_{n+1}}
\]
and, given $h>0$, set
\[
\ell(h):=\max_{p\in \Sigma_h}y(p)\,,
\]
where $\Sigma_h$ is the level set $\Sigma_h:=\{p\in\Sigma:z(p)=h\}$. 

\begin{lemma}[Width estimate]\label{lem:width}
Set
\[
\beta:=\sec\theta-\sec\overline \omega>0\quad\text{and}\quad x_0:=\lim_{\omega\to\overline\omega}x\left(P_\phi(\omega)\right)\,.
\]
For any $\e>0$ there exist $K_\varepsilon<\infty$ and $h_{\e}<\infty$ such that, for every $h>h_{\e}$, $p\in\Sigma_h$ and $s\in[0,1]$,
\bann
0\leq y(p)\le s(\ell(h)-{}&K_\varepsilon)\\
\implies{}& |x(p)-x_0|\geq \tfrac{\pi}{2}\left(\sec\overline\omega+(1-s)(\beta-\tfrac{2}{\pi}x_0)-\e\right)\,.
\eann
\end{lemma}

\begin{center}
\begin{figure}[h]
\includegraphics[width=\textwidth]{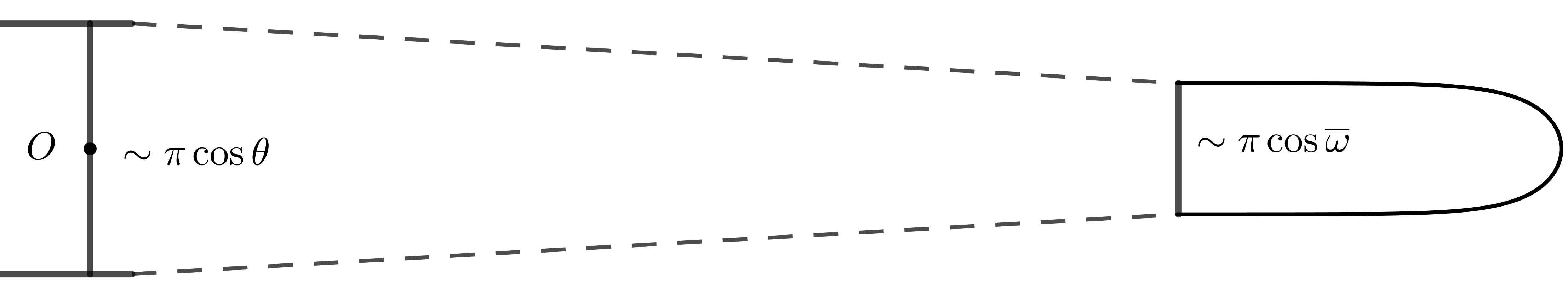}
\caption{Linearly interpolating between the `middle' and `edge' regions in the level set $\Sigma_h$. The horizontal axis is compressed.}\label{fig:linearlyinterpolating}
\end{figure}
\end{center}

\begin{proof}
Choose $\varepsilon>0$. Because $\Sigma$ converges to the oblique Grim hyperplane $\Gamma_{\overline\omega,\phi}$ after translating the `tips', $P_\phi(\omega)$, we can find some $h_\varepsilon$ and $K_\e$ such that
\[
|x(p)-x_0|\ge\tfrac{\pi}{2}\sec\overline\omega-\e
\]
for all $p\in \Sigma_h$ satisfying
\[
0\le y(p)\le \ell(h)-K_\e
\]
so long as $h\geq h_\varepsilon$.
%
Choose some $h\geq h_\varepsilon$ and consider the point $p_1\in \Sigma_h\cap\{e_1,\phi,e_{n+1}\}$ satisfying $x(p_1)\geq x_0$ and $0\leq y(p_1)= \ell(h)-K_\e$. (If there is no such point then the claim is vacuously true, else $p_1$ is uniquely determined.) Then
\[
x(p_1)-x_0\geq \tfrac{\pi}{2}\sec\omega-\e\,.
\]
On the other hand, because $\Sigma$ converges to the boundary of $S_\theta$ after translating vertically, we can assume that $h_\varepsilon$ is so large that
\[
x(p_0)\geq \tfrac{\pi}{2}\sec\theta-\varepsilon
\]
at the point $p_0\in\Sigma_h\cap\mathrm{span}\{e_1,\phi,e_{n+1}\}$ satisfying $y(p_0)=0$ and $x(p_0)\geq x_0$. Since $\Sigma_h$ is convex, we conclude that any point $p\in \Sigma_h\cap\mathrm{span}\{e_1,\phi,e_{n+1}\}$ satisfying $0\le y(p)\le \ell(h)-K_\e$ and $x(p)\geq x_0$ lies beyond the segment joining $p_0$ and $p_1$. In particular, if $y(p)\leq s(\ell(h)-K_\varepsilon)$ then
\begin{align*}
x(p)\geq{}& sx(p_1)+(1-s)x(p_0)\\
\geq{}&s\left(x_0+\tfrac{\pi}{2}\sec\overline\omega-\varepsilon\right)+(1-s)\left(\tfrac{\pi}{2}\sec\theta-\varepsilon\right)\\
={}&x_0+\tfrac{\pi}{2}\left(\sec\overline\omega+(1-s)(\beta-\tfrac{2}{\pi}x_0)\right)-\varepsilon\,.
\end{align*}
The other inequality is proved in much the same way (simply choose the points $p_0$ and $p_1$ on the other side of the $\{x=x_0\}$ plane).
\end{proof}


Reflecting $\Sigma^n$ through the $\{x=0\}$ hyperplane if necessary, we may assume in what follows that $x_0\geq 0$.

Given $\varepsilon>0$, choose $h_\varepsilon$ and $K_\varepsilon$ as in Lemma \ref{lem:width} and consider $h\geq h_{\varepsilon}$. Then, given any $p\in\Sigma$ satisfying $0\leq y(p)<(\ell(h)-K_\varepsilon)$ and $x(p)\geq x_0$, we can choose $s=s(p):=\frac{|y(p)|}{\ell(h)-K_\varepsilon}\in [0,1]$ and hence estimate
\begin{align*}
x(p)\geq{}&\frac{\pi}{2}\left(\sec\theta-\beta\frac{|y(p)|}{\ell(h)-K_\varepsilon}-\e\right)\,.
\end{align*}
Choosing $h_\varepsilon$ larger if necessary, we may assume that $\ell(h_\varepsilon)\geq 2K_\varepsilon$ and hence
\begin{align}\label{eq:widthp}
x(p)\geq{}&\frac{\pi}{2}\left(\sec\theta-\beta\frac{|y(p)|}{\ell(h)}\left(1+\frac{2K_\varepsilon}{\ell(h)}\right)-\e\right)\,.
\end{align}

Note also that, by convexity,
\[
\tan\overline\omega\ge \frac{h}{\ell(h)}\to\tan\overline\omega\;\;\text{as}\;\;h\to\infty\,.
\]

\begin{claim}\label{above}
There exists $\omega\in(\overline\omega,\theta)$ such that for $-t$ sufficiently large, $\Pi^{\omega}_t-te_{n+1}$ lies above $\Sigma$ for $-t$ sufficiently large. 
\end{claim}
\begin{proof}
Arguing by contradiction, suppose that for any $t<0$ and $\omega\in(\overline\omega,\theta)$, there is some point $p\in (\Pi^\omega_{t}-te_{n+1})\cap\Sigma\cap\{X:\inner{X}{\phi}>0\}$. Then there is some $\varphi\in[0,\frac{\pi}{2}]$ such that 
\[
h:=z(p)=-t-\ell_\omega(t)\sin\varphi,\;\; |y(p)|\leq \ell_\omega(t)\cos\varphi\;\; \text{and}\;\; |x(p)|<\frac{\pi}{2}\sec\omega\,,
\]
where $\ell_\omega$ is defined by \eqref{eq:ellomega} (see Figure \ref{fig:pancake_barrier}). Suppose further that $h\geq h_\varepsilon$.
\begin{center}
\begin{figure}[h]
\includegraphics[width=0.8\textwidth]{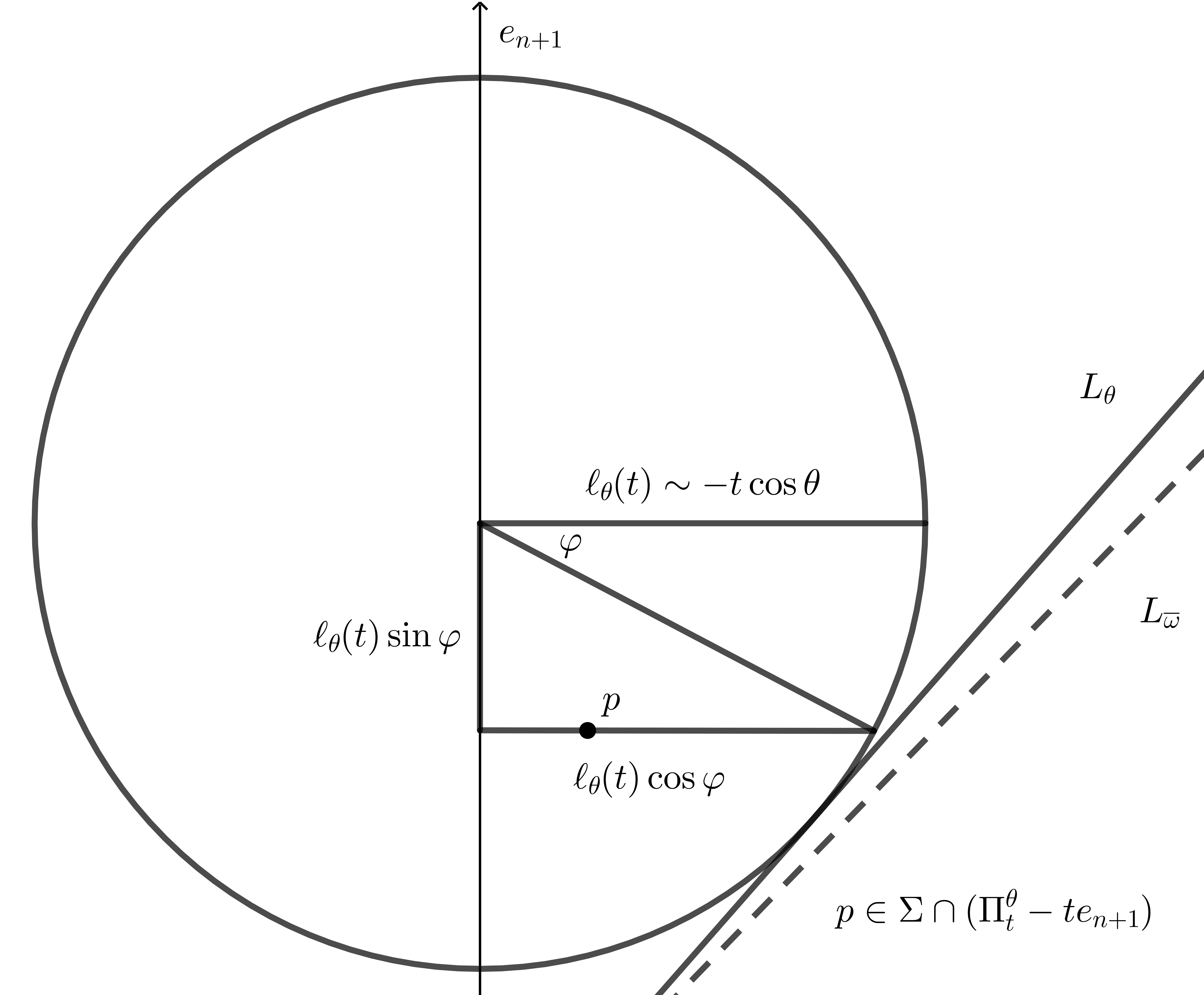}
\caption{If $\overline\omega<\theta$ then the pancake lies above the translator for $-t$ sufficiently large.}\label{fig:pancake_barrier}
\end{figure}
\end{center}
Recalling \eqref{eq:widthp}, we find
\[
\begin{split}
\sec\omega\ge& \sec\theta-\beta\frac{|y(p)|}{h}\frac{h}{\ell(h)}\left(1+\frac{2K_\varepsilon}{h}\frac{h}{\ell(h)}\right)-\e\\
\geq& \sec\theta-\beta\frac{\ell_\omega(t)\cos\varphi}{h}\tan\overline\omega\left(1+\frac{2K_\varepsilon}{h}\tan\overline\omega\right)-\e\,.
\end{split}
\]
That is,
\begin{align*}
\frac{\sec\theta-\sec\omega}{\sec\theta-\sec\overline\omega}\le{}&\frac{\ell_\omega(t)\cos\varphi}{h}\tan\overline\omega\left(1+\frac{2K_\varepsilon}{h}\tan\overline\omega\right)+\frac{\e}{\beta}\\
={}&\frac{\ell_\omega(t)\cos\varphi\tan\overline\omega}{-t-\ell_\omega(t)\sin\varphi}\left(1+\frac{2K_\varepsilon\tan\overline\omega}{-t-\ell_\omega(t)\sin\varphi}\right)+\frac{\e}{\beta}\,.
\end{align*}
Since the right hand side is non-increasing with respect to $\varphi$ for $\varphi\in[0,\frac{\pi}{2}]$, we may estimate
\begin{align*}
\frac{\sec\theta-\sec\omega}{\sec\theta-\sec\overline\omega}\le{}&\frac{\ell_\omega(t)}{-t}\tan\overline\omega\left(1+\frac{2K_\varepsilon}{-t}\tan\overline\omega\right)+\frac{\e}{\beta}\,.
\end{align*}
But $\frac{\ell_\omega(t)}{-t}\to\cos\omega$ as $t\to-\infty$, so we conclude, for $-t\geq -t_\varepsilon$ sufficiently large, that
\begin{align*}
\frac{\sec\theta-\sec\omega}{\sec\theta-\sec\overline\omega}\le{}&\cos\omega\tan\overline\omega+\frac{2\e}{\beta}\le\sin\overline\omega+\frac{2\e}{\beta}\,.
\end{align*}
Choosing $\omega$ sufficiently close to $\overline\omega$ and $\varepsilon$ sufficiently small results in a contradiction. This completes the proof of Claim~\ref{above}. 
\end{proof}

By Claim~\ref{above}, there exists $\omega\in(\overline\omega,\theta)$ such that $\Pi^{\omega}_t-te_{n+1}$ lies above $\Sigma$ for $-t$ sufficiently large. In other words $\Pi^{\omega}_t$ lies above $\Sigma_t:=\Sigma+te_{n+1}$ for $-t$ sufficiently large. But since $\Pi^{\omega}_t$ and $\Sigma_t$ both reach the origin at time zero, this contradicts the avoidance principle. This finishes the proof of Theorem \ref{thm:asymptotics2} in case $n\geq 3$. 

It remains to remove the reflection symmetry hypothesis when $n=2$. If both asymptotic Grim planes have width smaller than $\pi\sec\theta$ then the above argument needs no modification, so it suffices to address the case that $\Sigma$ is asymptotic to the correct oblique Grim plane in one direction, say $-e_2$, but not the other, $e_2$. This can be achieved with a similar argument by centering the ancient pancake not on the $z$-axis but rather on the axis bisecting the two asymptotic lines, i.e. the ray $\{r\left(\cos\frac{\theta-\overline\omega}{2}e_3+\sin\frac{\theta-\overline\omega}{2}e_2\right):r>0\}$. We omit the details since the result in this case was already proved by Spruck and Xiao \cite{SX}.

\end{proof}

Combining the unique asymptotics with the Alexandrov reflection principle, we may complete the proof of Theorem \ref{thm:asymptotics}. 

\begin{corollary}\label{cor:reflex}
Given $\theta\in(0,\frac{\pi}{2})$, let $\Sigma$ be a strictly convex translator which lies in $S_\theta^{n+1}$ and in no smaller slab. If $n\geq 3$ assume in addition that $\Sigma$ is rotationally symmetric with respect to $\mathbb{E}^{n-1}$. Then $\Sigma$ is reflection symmetric across the hyperplane $\{0\}\times \R^{n}$.
\end{corollary}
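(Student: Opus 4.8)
The plan is to use the Alexandrov moving plane (reflection) method with the family of hyperplanes $\{x=\lambda\}$ orthogonal to $e_1$, where $x(X):=\inner{X}{e_1}$, and to show that the critical position is $\lambda=0$. For $\lambda$ close to $\tfrac\pi2\sec\theta$ the reflected cap lies strictly on one side of $\Sigma$ by convexity and the fact that $\sup_\Sigma x=\tfrac\pi2\sec\theta$ (so one can start the reflection), and we decrease $\lambda$ until the first value $\lambda_0$ at which the reflected portion $\Sigma_{>\lambda_0}^{\mathrm{ref}}$ ceases to lie weakly on one side of $\Sigma_{<\lambda_0}$. At $\lambda_0$ either the two surfaces are tangent at an interior point, or they become tangent at a point of $\partial$ (but $\Sigma$ has no boundary), or the reflection ``runs off to infinity'' in the sense that $\lambda_0$ is achieved only asymptotically. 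The first alternative, by the strong maximum principle for the (quasilinear, uniformly elliptic on the relevant graph) translator equation applied to the difference of the two graphs over their common projected domain — noting that the reflected surface is again a translator since the equation \eqref{eq:translator} is invariant under reflection in any hyperplane containing $e_{n+1}$ — forces $\Sigma$ to be symmetric across $\{x=\lambda_0\}$, and then the no-smaller-slab hypothesis forces $\lambda_0=0$.

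The substance of the argument is therefore to exclude the ``escape to infinity'' scenario, and this is exactly where Theorem~\ref{thm:asymptotics2} enters. Since $\Sigma$ is convex, lies in $S_\theta^{n+1}$ and in no smaller slab, and (when $n\ge3$) is rotationally symmetric about $\mathbb E^{n-1}$, Theorem~\ref{thm:asymptotics2} tells us that along each direction $\pm\phi\in\mathbb E^{n-1}$ the translator is asymptotic, after translating the ``tips'' $P_\phi(\omega)$ to the origin, to the oblique Grim hyperplane $\Gamma^n_{\theta,\phi}$. Now the oblique Grim hyperplane $\Gamma^n_{\theta,\phi}$, being a rotate-and-rescale of the standard Grim hyperplane, is itself reflection symmetric across the hyperplane through its ``spine'' — and crucially the obstruction to continuing the moving plane is controlled by the position $x_0=\lim_{\omega\to\theta}x(P_\phi(\omega))$ of the tip. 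Reflecting $\Sigma$ in $\{x=0\}$ if necessary we may assume $x_0\ge0$; one then observes that the asymptotic Grim hyperplanes at $\pm\phi$ are reflections of each other across $\{x=x_0\}$, so convexity plus the asymptotics pin the plane of symmetry. Concretely, I would run the moving plane from the side $x>0$: the first contact value $\lambda_0$ satisfies $\lambda_0\ge 0$, and if $\lambda_0>0$ then by convexity the reflected cap $\Sigma^{\mathrm{ref}}_{>\lambda_0}$ and $\Sigma_{<\lambda_0}$ can only touch at infinity, but then passing to a limit of translations (using Corollary~\ref{cor:regularity}-type bounded-curvature compactness, which holds since $\Sigma$ has bounded $A$) produces two translators that coincide, one of which is $\Gamma^n_{\theta,\phi}$ or a parallel plane; a dimension/slab count shows this is incompatible with $\lambda_0>0$ unless $x_0=0$, and if $x_0=0$ we are already done since then $\overline\omega=\theta$ asymptotics at $\pm\phi$ are literal reflections in $\{x=0\}$, and convexity upgrades ``asymptotically symmetric'' to ``symmetric'' via the strong maximum principle.

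A cleaner route that avoids the escape-to-infinity analysis altogether is to use the asymptotics to \emph{initialize} the reflection already at a good position: by Theorem~\ref{thm:asymptotics2} the surface $\Sigma$ is, outside a compact set, $C^\infty$-close to $\Gamma^n_{\theta,\phi}\cup\Gamma^n_{\theta,-\phi}$, and (after the normalization $x_0\ge0$, combined with the same statement for $-\phi$ which forces $x_0\le0$, hence $x_0=0$) these two model Grim hyperplanes are exactly symmetric across $\{x=0\}$. Thus for every $\lambda>0$ the reflection of $\Sigma\cap\{x>\lambda\}$ across $\{x=\lambda\}$ lies strictly inside $\Sigma\cap\{x<\lambda\}$ in a neighborhood of infinity, so that the moving plane can never get stuck at a positive $\lambda$; it must reach $\lambda_0=0$, and at $\lambda_0=0$ the strong maximum principle (interior, since $\partial\Sigma=\emptyset$) together with Hopf's boundary-point lemma where $\{x=0\}$ meets $\Sigma$ tangentially gives full reflection symmetry of $\Sigma$ across $\{0\}\times\R^n$.

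The main obstacle, and the step I would spend the most care on, is the reduction $x_0=0$: one must combine the asymptotics for $\phi$ and for $-\phi$ to conclude that the two limiting oblique Grim hyperplanes are genuine mirror images of each other across a single hyperplane $\{x=x_0\}$, and that convexity of $\Sigma$ forbids any ``drift'' of the spine, so that $x_0$ must vanish. Once $x_0=0$ is established, the rest is the standard Alexandrov argument: it uses only the invariance of \eqref{eq:translator} under reflections in hyperplanes containing $e_{n+1}$, the strong maximum principle and Hopf lemma for the uniformly elliptic quasilinear operator governing graphical translators, and the bounded-curvature compactness already invoked in the proof of Theorem~\ref{thm:existence}.
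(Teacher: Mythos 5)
Your approach is essentially the same as the paper's: an Alexandrov moving-plane argument with the family $\{x=\lambda\}$, together with Theorem~\ref{thm:asymptotics2} to rule out the moving plane getting stuck at a positive $\lambda$ via touching ``at infinity.'' The paper packages this as Lemma~\ref{lem:Alexandrov} (reflecting $\Sigma$ reduces, by the strong maximum principle and Hopf lemma, to reflecting the high part $\Sigma\cap\{z>h\}$) plus Claim~\ref{claim:reflection} (for any $\alpha>0$ a sufficiently high part reflects strictly across $\{x=\alpha\}$); your ``cleaner route'' paragraph is the same two steps in different words.

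The place where your writeup goes astray is the lengthy discussion of the reduction $x_0=0$, which you flag as ``the main obstacle.'' In fact once Theorem~\ref{thm:asymptotics2} is known it is immediate: the translators $\Sigma-P_\phi(\omega)$ lie in slabs $\{|x+x(P_\phi(\omega))|<\tfrac{\pi}{2}\sec\theta\}$, and the limit is $\Gamma^n_{\theta,\phi}$, which lies in $\{|x|<\tfrac{\pi}{2}\sec\theta\}$ and in no smaller slab; matching the two forces $x_0:=\lim_\omega x(P_\phi(\omega))=0$. Your proposed sub-argument --- that the normalization $x_0\ge 0$ for $\phi$ ``combined with the same statement for $-\phi$'' forces $x_0\le0$ and hence $x_0=0$ --- does not work as stated: when $n\ge3$ the rotational symmetry in $\mathbb{E}^{n-1}$ takes $\phi$ to $-\phi$ while fixing $e_1$, so the tip positions for $\phi$ and $-\phi$ have the \emph{same} $e_1$-coordinate, and the two normalizations cannot be combined in the way you suggest. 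Similarly, the assertion that ``the asymptotic Grim hyperplanes at $\pm\phi$ are reflections of each other across $\{x=x_0\}$'' presupposes what you are trying to prove (it is only true when $x_0=0$). Fortunately none of this is needed: discard that sub-argument, read $x_0=0$ off the conclusion of Theorem~\ref{thm:asymptotics2}, and the rest of your moving-plane argument (initializing near infinity using the $C^\infty$-closeness to the symmetric model, then invoking the strong maximum principle and Hopf lemma at $\lambda=0$) matches the paper's Lemma~\ref{lem:Alexandrov} and Claim~\ref{claim:reflection}.
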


\begin{proof}
We proceed similarly as in \cite[Theorem 6.2]{BLT}. Let us begin by introducing some notation. Given a unit vector $e\in S^n$ and some $\alpha\in \R$, denote by $\mathrm{H}_{e,\alpha}$ the halfspace $\{p\in \R^{n+1}:\left\langle p,e\right\rangle<\alpha\}$ and by $R_{e,\alpha}\cdot\Sigma:=\{p-2(\left\langle p,e\right\rangle-\alpha)e:p\in \Sigma\}$ the reflection of $\Sigma$ across the hyperplane $\partial \mathrm{H}_{e,\alpha}$. We say that $\Sigma$ \emph{can be reflected strictly about $\mathrm{H}_{e,\alpha}$} if $(R_{e,\alpha}\cdot\Sigma)\cap \mathrm{H}_{e,\alpha}\subset \Omega\cap\mathrm{H}_{e,\alpha}$.

\begin{lemma}[Alexandrov reflection principle]\label{lem:Alexandrov}
Let $\Sigma$ be a convex translator. If $\Sigma_h:=\Sigma\cap\{(x,y,z)\in \R\times \R^{n-1}\times \R:z>h\}$ can be reflected strictly about $\mathrm{H}_{e,\alpha}$ for some $e\in \{e_{n+1}\}^\perp$ then $\Sigma$ can be reflected strictly about $\mathrm{H}_{e,\alpha}$.
\end{lemma}
\begin{proof}
This is a consequence of the strong maximum principle and the boundary point lemma (cf. \cite[Chapter 10]{GT}).
\end{proof}

\begin{claim}\label{claim:reflection}
For every $\alpha\in(0,\frac{\pi}{4})$ there exists $h_\alpha<\infty$ such that $\Sigma_{h_\alpha}:=\Sigma\cap\{(x,y,z)\in \R\times \R^{n-1}\times \R:z>h_\alpha\}$ can be reflected strictly about $\mathrm{H}_{\alpha}:=\mathrm{H}_{e_1,\alpha}$. 
\end{claim}
\begin{proof}
Suppose that the claim does not hold. Then there must be some $\alpha\in (0,\frac{\pi}{4})$ and a sequence of heights $h_i\to\infty$ such that $(R_{\alpha}\cdot\Sigma_{h_i})\cap \mathrm{H}_\alpha\cap \Sigma_{h_i}\neq\emptyset$. Choose a sequence of points $p_i=x_ie_1+y_ie_2\in\Sigma_{h_i}$ whose reflection about the hyperplane $\mathrm{H}_\alpha$ satisfies $(2\alpha-x_i)e_1+y_ie_2\in (R_{\alpha}\cdot\Sigma_{h_i})\cap\Sigma_{h_i}\cap \mathrm{H}_\alpha$ and set $p'_i=x'_ie_1+ y'_ie_2:=(2\alpha-x_i)e_1+y_ie_2$. Without loss of generality, we may assume that $y'_i=y_i\geq 0$. Since $\alpha\leq x_i<\frac{\pi}{2}$, the point $p'_i$ satisfies  $\alpha\geq x'_i>-\frac{\pi}{2}+2\alpha$ so that, after passing to a subsequence, $\lim_{i\to\infty}x'_i\in[-\frac{\pi}{2}+2\alpha,\alpha]$. But since $\Sigma$ is convex and converges, after translating in the plane $\mathrm{span}\{e_2,e_{n+1}\}$, to the Grim hyperplane $\Gamma_{e_2,\theta}$, we conclude that
\[
0=\lim_{i\to\infty}(x_i+x'_i)=2\alpha\,.
\]
So $\alpha=0$, a contradiction.
\end{proof}
It now follows from Lemma \ref{lem:Alexandrov} that $\Sigma$ can be reflected across $\mathrm{H}_\alpha$ for all $\alpha\in(0,\frac{\pi}{2})$. The same argument applies when the halfspace $\mathrm{H}_\alpha$ is replaced by $-\mathrm{H}_{\alpha}=\{(x,y,z)\in \R\times\R\times\R^{n-1}:x>-\a\}$. Taking $\alpha\to 0$ finishes the proof of the corollary.
\end{proof}


\bibliographystyle{acm}
\bibliography{translators}

\end{document}